\theoremstyle{plain}
\newtheorem{theorem}{Theorem}[section]
\newtheorem{lemma}[theorem]{Lemma}
\newtheorem{corollary}[theorem]{Corollary}
\theoremstyle{remark}
\newtheorem{remark}[theorem]{Remark}
\newtheorem{example}[theorem]{Example}
\newcommand{\C}{\mathbb{C}}
\newcommand{\Z}{\mathbb{Z}}
\newcommand{\Fq}{\mathbb{F}_{q}}
\DeclareMathOperator{\charact}{char}
\DeclareMathOperator{\spec}{Spec}
\DeclareMathOperator{\fix}{Fix}
\begin{document}
\title[Modular quotient singularities]{Three-dimensional isolated quotient \\
singularities in odd characteristic}
\author{D.~A. Stepanov}
\address{The Department of Mathematical Modelling \\
         Bauman Moscow State Technical University \\
         ul. 2-ya Baumanskaya 5, Moscow 105005, Russia}
\email{dstepanov@bmstu.ru}
\thanks{The research was supported by the Russian Grant for Leading Scientific 
Schools, grant no. 5139.2012.1, and RFBR, grant no. 11-01-00336-a.}
\date{}
\begin{abstract}
Let a finite group $G$ act linearly on a finite dimensional vector space $V$
over an algebraically closed field $k$ of characteristic $p>2$. Assume that
the quotient $V/G$ is an isolated singularity. In the case when $p$ does not
divide the order of $G$, isolated singularities $V/G$ are completely classified
and their classification reduces to Zassenhaus-Vincent-Wolf classification of
isolated quotient singularities over $\C$. In the present paper we show that
if $\dim V=3$, then also in the modular case $p\,|\,|G|$ classification of
isolated quotient singularities reduces to Zassenhaus-Vincent-Wolf 
classification. Some remarks on modular quotient singularities in other
dimensions and in even characteristic are also given.
\end{abstract}
\maketitle

\section{Introduction}
Let $V$ be an algebraic variety defined over an algebraically closed field
$k$ of characteristic $p$. Let $G$ be a finite group acting on $V$ and $P\in V$
a fixed closed point of this action. Under these assumptions there is a 
well-defined quotient algebraic variety $V/G$. Denote by $\pi\colon V\to V/G$
the natural projection. We say that $Q=\pi(P)\in V/G$ is an \emph{isolated
singularity}, if the variety $V/G$ is singular at $Q$ (i.e., the local ring
$\mathcal{O}_{V/G,Q}$ is not regular), and there are no other singular points
of $V/G$ in some Zariski open neighborhood of $Q$. If characteristic $p$
does not divide the order $|G|$ of $G$, isolated quotient singularities are 
completely classified up to formal or, when $k=\C$ is the field of complex 
numbers, up to analytic equivalence. The classification over $\C$ was obtained
by H. Zassenhaus, G. Vincent, and J. Wolf as a part of classification of
manifolds of constant positive curvature. It is summarized in Wolf's book
\cite{Wolf}, Chapters~5, 6, 7 (see also our survey in \cite{St}, which is 
written from a point of view of singularity theory). It is not hard to see 
that Zassenhaus-Vincent-Wolf classification covers also isolated quotient
singularities over arbitrary algebraically closed field $k$ of characteristic 
$0$ and of prime characteristic $p$ where $p$ does not divide $|G|$.
The modular case $p\,|\,|G|$ remains open.

The first difficulty in the modular case is that the action of $G$ on $V$
is not in general linearizable in a formal neighborhood of a fixed point $P$.
This means that it is in general impossible to choose a system of parameters 
$x_1,\ldots,x_n$ in the complete local ring $\widehat{\mathcal{O}_{V,P}}$ so
that $G$ acts on $\widehat{\mathcal{O}_{V,P}}$ via linear substitutions in
$x_1,\ldots,x_n$. Still a lot of isolated quotient singularities arise as 
quotients by a nonlinear action of a modular group, see, e.g., \cite{Lorenzini}. 
Thus the classification problem does not reduce to a problem of linear 
representation theory as in the nonmodular case.  Nonlinear actions are 
not considered in this paper, so from now on we assume that $V$ is a 
vector space and $G$ is a subgroup of the general linear group $GL(V)$. 
The second difficulty lies in the fact that the inverse 
Chevalley-Shephard-Todd Theorem does not hold in the modular case. A linear 
map $g\colon V\to V$ is called a \emph{pseudoreflection} if it has finite 
order and the set of points fixed by $g$ is a hyperplane. By $S(V^*)$ we 
denote the symmetric algebra of the dual space $V^*$ of $V$.
\begin{theorem}[Chevalley-Shephard-Todd Theorem, \cite{Benson}, Theorem~7.2.1]
\label{T:CST}
Let V be a finite dimensional vector space over a field $k$ of characteristic
$p$, $p\geq 0$, and $G$ a subgroup of $GL(V)$. Then if the ring of invariants 
$S(V^*)^G$ of $G$ is polynomial (and in particular $V/G$ is nonsingular), then 
the group $G$ is generated by pseudoreflections. If the characteristic $p$ does
not divide the order of the group $G$, then the converse also holds.
\end{theorem}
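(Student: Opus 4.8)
The plan is to prove the two implications separately, since the first (polynomial invariants force a pseudoreflection group) holds in every characteristic while the second genuinely uses $p\nmid|G|$. Write $R=S(V^{*})$, $n=\dim V$, $A=R^{G}$; both are $\mathbb{N}$-graded $k$-algebras with degree-zero part $k$, $R$ is module-finite over $A$ by Noether's theorem, and $R^{H}$ for any subgroup $H$ is a normal domain, being an invariant subring of the normal domain $R$. For the first implication assume $A$ is polynomial, hence regular, and let $H\leq G$ be the subgroup generated by all pseudoreflections lying in $G$. I claim $R^{H}=A$, which forces $H=G$ because $\mathrm{Frac}(R)/\mathrm{Frac}(A)$ is Galois with group $G$, so the fixed field $\mathrm{Frac}(R)^{H}=\mathrm{Frac}(R^{H})$ would then equal $\mathrm{Frac}(A)=\mathrm{Frac}(R)^{G}$. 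To prove the claim I show the finite extension $A\subseteq R^{H}$ is unramified in codimension one. Fix a height-one prime $\mathfrak{q}\subset R^{H}$, put $\mathfrak{p}=\mathfrak{q}\cap A$, and choose a height-one prime $\mathfrak{P}\subset R$ over $\mathfrak{q}$. Every $g$ in the inertia group $I(\mathfrak{P}\mid\mathfrak{p})\leq G$ acts trivially on $R/\mathfrak{P}$, i.e. fixes pointwise the irreducible hypersurface $Z\subset V$ defined by $\mathfrak{P}$; being linear, $g$ fixes the linear span of $Z$, a subspace of dimension $\geq n-1$, so either $g=1$ or $\mathrm{Fix}(g)$ is a hyperplane and $g$ is a pseudoreflection. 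Hence $I(\mathfrak{P}\mid\mathfrak{p})\leq H$, so $I(\mathfrak{P}\mid\mathfrak{p})=I(\mathfrak{P}\mid\mathfrak{q})$; applying the identity $|I|=e\cdot f_{\mathrm{insep}}$ to the Galois extensions $\mathfrak{P}\mid\mathfrak{p}$ and $\mathfrak{P}\mid\mathfrak{q}$ and using multiplicativity of $e$ and of $f_{\mathrm{insep}}$ along the tower $\mathfrak{P}\mid\mathfrak{q}\mid\mathfrak{p}$ yields $e(\mathfrak{q}\mid\mathfrak{p})=f_{\mathrm{insep}}(\mathfrak{q}\mid\mathfrak{p})=1$, so $\mathfrak{q}$ is unramified over $\mathfrak{p}$.

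Now Zariski--Nagata purity of the branch locus applies: $A$ is regular and $R^{H}$ is a normal finite extension \'etale in codimension one, hence $\mathrm{Spec}\,R^{H}\to\mathrm{Spec}\,A$ is \'etale. Over the graded maximal ideal $\mathfrak{m}_{A}=A_{+}$ there is a unique prime of $R^{H}$, namely $R^{H}_{+}$, with residue field $k$, so the fibre $R^{H}/\mathfrak{m}_{A}R^{H}$ is a local finite \'etale $k$-algebra, hence equals $k$; by graded Nakayama $R^{H}$ is then generated as an $A$-module by $1$, i.e. $R^{H}=A$. This proves the first implication (in particular $V/G\cong\mathbb{A}^{n}$ is nonsingular).

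For the second implication suppose $p\nmid|G|$ and $G$ is generated by pseudoreflections; the Reynolds operator $\frac{1}{|G|}\sum_{g\in G}g$ is available, and I would follow Chevalley's classical argument. Choose a minimal homogeneous generating set $f_{1},\dots,f_{m}$ of $A$; as $\mathrm{trdeg}_{k}A=n$ we have $m\geq n$, and it suffices to prove the $f_{i}$ algebraically independent, for then $m=n$ and $A=k[f_{1},\dots,f_{n}]$ is polynomial. Assume for contradiction a nontrivial relation $P(f_{1},\dots,f_{m})=0$ of least weighted degree, $y_{i}$ having weight $\deg f_{i}$. Not all $\partial P/\partial y_{i}$ vanish: otherwise $P=\widetilde{P}(y_{1}^{p},\dots,y_{m}^{p})$, and since $k$ is perfect $P=(\widetilde{P}^{\,1/p}(y_{1},\dots,y_{m}))^{p}$, so the domain $R$ would contain the shorter relation $\widetilde{P}^{\,1/p}(f_{1},\dots,f_{m})=0$. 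Each $P_{i}:=(\partial P/\partial y_{i})(f_{1},\dots,f_{m})$ is an invariant, and $\sum_{i}P_{i}\,\partial f_{i}/\partial x_{j}=0$ for every $j$ by the chain rule. The contradiction is extracted from these relations by Chevalley's lemma on non-redundant relations among invariants, whose proof rests on the fact that for a pseudoreflection $s\in G$ with reflecting hyperplane $\{\ell_{s}=0\}$ one has $h-s(h)\in\ell_{s}R$ for all $h\in R$, followed by averaging over $G$; the lemma forces one gradient row $(\partial f_{i}/\partial x_{j})_{j}$ to be expressible through the others in a way incompatible with the algebraic independence already secured for a proper sub-collection, which closes the induction.

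The genuinely subtle step is the first implication in the modular case. One cannot deduce that the connected graded cover $\mathrm{Spec}\,R^{H}\to\mathrm{Spec}\,A$ is trivial from simple-connectedness of $\mathbb{A}^{n}$, because $\mathbb{A}^{n}$ carries nontrivial finite \'etale covers in characteristic $p$ (Artin--Schreier); the passage from ``\'etale in codimension one'' to ``trivial'' must instead go through purity of the branch locus together with the grading and graded Nakayama, and the identification of inertia elements as pseudoreflections must be made with no tameness assumption on the ramification --- precisely the role of the dimension count on $\mathrm{Fix}(g)$ above. In the second implication the care needed is the characteristic-$p$ bookkeeping (ruling out the ``all partials vanish'' case via perfectness of $k$) and the use of $p\nmid|G|$, which is what makes Chevalley's lemma, and the averaging in its proof, legitimate.
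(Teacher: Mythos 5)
This theorem is not proved in the paper at all: it is quoted from Benson's book (Theorem~7.2.1) as a known classical result, so there is no in-paper argument to compare yours against; I can only judge your reconstruction on its own merits. Your first implication is correct and is essentially the standard modern proof going back to Serre: identify the inertia group of a height-one prime of $S(V^*)$ as consisting of pseudoreflections via the dimension count on $\fix(g)$, deduce from $|I|=e\cdot f_{\mathrm{insep}}$ and multiplicativity in towers that $R^G\subseteq R^H$ is unramified in codimension one, invoke Zariski--Nagata purity over the regular base, and then kill the resulting finite \'etale cover using the grading and graded Nakayama rather than the (false in characteristic $p$) simple connectedness of affine space. That half I would accept as written.

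The second implication, however, is not yet a proof. Everything after the relations $\sum_i P_i\,\partial f_i/\partial x_j=0$ is deferred to an unstated ``Chevalley's lemma on non-redundant relations among invariants,'' and the sentence describing how that lemma produces the contradiction (``forces one gradient row to be expressible through the others in a way incompatible with the algebraic independence already secured for a proper sub-collection'') does not correspond to a concrete argument: you never specify which sub-collection, what the induction runs on, or where minimality of the generating set and the degree comparison enter. That is precisely the nontrivial half of Chevalley's argument --- one must prove a statement of the shape: if $h_1,\dots,h_m$ are homogeneous invariants with $h_1\notin(h_2,\dots,h_m)R^G$ and $\sum g_ih_i=0$ with $g_i\in R$ homogeneous, then $g_1$ lies in the Hilbert ideal $(f_1,\dots,f_m)R$ --- and it is exactly there that the Reynolds operator and the identity $h-s(h)\in\ell_sR$ are used; leaving it as a black box leaves the implication unproved. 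A second, smaller gap: you dispose of the case where all $\partial P/\partial y_i$ vanish by assuming $k$ perfect, but the theorem is stated for an arbitrary field. This is repairable (taking $G$-invariants commutes with the base change $k\to\bar k$, and polynomiality of $S(V^*)^G$ can be tested after extension of scalars --- the paper makes the analogous observation for the Kemper--Malle theorem), but the reduction should be stated rather than silently assumed.
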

In the modular case, a quotient $V/G$ may be singular even for $G$ generated by
pseudoreflections, as is shown by the example of the symmetric group $S_6$
in its $4$-dimensional irreducible $2$-modular representation obtained as the
nontrivial constituent of the permutation module, see \cite{KM}, Example~2.2.
One can check that in this example the quotient singularity $V/S_6$ is not 
isolated. In fact, this is a general phenomenon as follows from a remarkable 
result of G. Kemper and G. Malle:
\begin{theorem}[\cite{KM}, Main Theorem]\label{T:KM}
Let $V$ be a finite dimensional vector space and $G$ a finite irreducible 
subgroup of $GL(V)$. Then $S(V^*)^G$ is a polynomial ring if and only if $G$ 
is generated by pseudoreflections and the pointwise stabilizer in $G$ of any
nontrivial subspace of $V$ has a polynomial ring of invariants.
\end{theorem}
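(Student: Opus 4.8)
The plan is to prove the two implications separately; in both, the engine is a local analysis of the quotient at a point, combined with the Chevalley--Shephard--Todd Theorem~\ref{T:CST}.

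First I isolate the local tool. For a closed point $P\in V$ let $G_P\le G$ be its stabilizer. As the action is linear and $G_P$ fixes $P$, the group $G_P$ acts linearly on the coordinates centred at $P$, hence on $\widehat{\mathcal{O}_{V,P}}\cong k[[x_1,\dots,x_n]]$. Since $S(V^*)$ is finite over $S(V^*)^G$ and $G$ is finite, formation of $G$-invariants commutes with completion, and completing along the fibre over $\pi(P)$ yields a canonical isomorphism
\[
\widehat{\mathcal{O}_{V/G,\pi(P)}}\ \cong\ \bigl(k[[x_1,\dots,x_n]]\bigr)^{G_P}\ =\ \widehat{S(V^*)^{G_P}},
\]
the completion on the right being at the irrelevant ideal. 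Hence $V/G$ is smooth at $\pi(P)$ precisely when $S(V^*)^{G_P}$ is a polynomial ring, where one uses the elementary fact that a finitely generated graded $k$-algebra $R$ with $R_0=k$ whose localization at $R_+$ is regular is itself polynomial: a homogeneous lift of a regular system of parameters generates $R$ by graded Nakayama, and a dimension count forces these generators to be algebraically independent.

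\emph{Necessity.} Assume $S(V^*)^G$ is polynomial, so $V/G\cong\mathbb{A}^n$ is smooth, and $G$ is generated by pseudoreflections by Theorem~\ref{T:CST}. Let $W\subseteq V$ be a nontrivial subspace and put $H=G_W$. Replacing $W$ by $V^H\supseteq W$ does not change $H$, so we may take $W=V^H$. Since $k$ is infinite, the finitely many proper subspaces $W\cap\fix(g)$, $g\in G\setminus H$, do not cover $W$, so some $P\in W$ has $G_P=H$. By the local tool $\widehat{S(V^*)^{H}}\cong\widehat{\mathcal{O}_{V/G,\pi(P)}}$ is regular, whence $S(V^*)^{G_W}$ is polynomial.

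\emph{Sufficiency.} Assume $G$ is generated by pseudoreflections and $S(V^*)^{G_W}$ is polynomial for every nontrivial $W$. If $P\ne 0$, then $G_P=G_{kP}$ and $kP$ is nontrivial, so $S(V^*)^{G_P}$ is polynomial and $V/G$ is smooth at $\pi(P)$; thus $V/G$ is smooth away from the vertex $\pi(0)$. It remains to prove that the graded normal domain $R:=S(V^*)^G$, now carrying at worst an isolated singularity at $R_+$, is regular there, i.e.\ that $S(V^*)$ is a free $R$-module. This is the crux and the main obstacle: regularity on the punctured spectrum does not suffice by itself (affine cones over smooth projective varieties show this), so the reflection hypothesis and the hypotheses on the parabolics $G_W$ must be used together. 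One natural line of attack is an induction on $|G|$: by the argument of the necessity part, each $G_W$ with $W\ne 0$ again satisfies all hypotheses (it is generated by pseudoreflections by Theorem~\ref{T:CST}, and its parabolics $G_W\cap G_U=G_{W+U}$ are parabolics of $G$ at nonzero subspaces), while $|G_W|<|G|$ since $G$ is irreducible; one then examines a counterexample of least order, controlling $R$ near the vertex through the ramification of $V\to V/G$ along the reflecting hyperplanes, whose pointwise stabilizers are extensions of a cyclic group of order prime to $p$ by an elementary abelian $p$-group and have polynomial invariants by hypothesis. Making such an argument airtight is delicate---reducibility of the groups $G_W$ obstructs a naive induction---and this is precisely the content of Kemper and Malle's proof, whose sharpest form is intertwined with the classification of finite irreducible modular reflection groups.
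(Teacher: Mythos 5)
This statement is quoted in the paper from Kemper--Malle \cite{KM} and is not proved there, so there is no in-paper argument to compare against; judged on its own terms, your proposal establishes only half of the theorem. The ``local tool'' you set up is sound and is exactly the machinery the paper develops in Section~\ref{S:prelim}: your completion isomorphism is Lemma~\ref{L:isomofquots} combined with the translation argument of Lemma~\ref{L:fixator}, and your observation that a graded algebra regular at the irrelevant ideal is polynomial is Lemma~\ref{L:regofquot}. With these, your necessity argument is correct: reduce to $W=V^{H}$, pick a point of $W$ off the finitely many proper subspaces $W\cap\fix(g)$, $g\notin G_W$ (legitimate after passing to $\bar k$, as the paper notes one may), and read off regularity of $S(V^*)^{G_W}$ from smoothness of $V/G$.

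The gap is the sufficiency direction, and you have named it yourself: after showing $V/G$ is smooth away from the vertex, you must still show it is smooth \emph{at} the vertex, and ``one natural line of attack is an induction on $|G|$\dots making such an argument airtight is delicate'' is not a proof. This step is the entire content of the theorem. No soft argument is known: regularity on the punctured spectrum plus generation by pseudoreflections genuinely does not force regularity at the vertex once $p$ divides $|G|$ (the paper's $S_6$ example in characteristic $2$ is a reflection group with singular quotient, and one must use the parabolic hypothesis in an essential way). The proof in \cite{KM} proceeds by classifying the finite irreducible modular groups generated by pseudoreflections whose parabolics all have polynomial invariants and verifying polynomiality case by case; the obstruction you flag --- that the parabolics $G_W$ are typically reducible, so the statement does not feed back into itself as an induction hypothesis --- is precisely why, and is also why the present paper must work to extend the result to reducible groups (Theorem~\ref{T:equiv} and Theorem~\ref{T:main}). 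As it stands, your proposal should cite \cite{KM} for sufficiency rather than claim a proof.
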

If the condition ``irreducible'' could be eliminated from the statement of 
Theorem~\ref{T:KM}, the classification of isolated (linear) quotient 
singularities would reduce to the nonmodular case, that is, to
Zassenhaus-Vincent-Wolf classification. Moreover, in Section~\ref{S:KMandIQS} 
we prove the following equivalence.
\begin{theorem}\label{T:equiv}
The following two statements are equivalent:
\begin{enumerate}
\item Kemper-Malle Theorem~\ref{T:KM} holds for all, not only irreducible,
finite linear groups $G$;
\item let $G$ be a finite subgroup of $GL(V)$ and $H$ the normal subgroup of $G$
generated by pseudoreflections. If $V/G$ is an isolated singularity, then
$V/H$ is nonsingular, $p(=\charact k) \nmid |G/H|$, and 
$$V/G \simeq (V/H)/(G/H),$$
i.e., any isolated quotient singularity $V/G$ is naturally isomorphic to a 
nonmodular quotient singularity.
\end{enumerate}
\end{theorem}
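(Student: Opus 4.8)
The plan is to reduce both implications to a single local description of $V/G$ and then feed it into Theorems~\ref{T:CST} and~\ref{T:KM}. For $v\in V$ write $G_v=\{g\in G:gv=v\}$ for the stabilizer; this is exactly the pointwise stabilizer of the line $\langle v\rangle$. First I would record that, since the linear action of $G_v$ fixes $v$, the group $G_v$ acts \emph{linearly} on $\widehat{\mathcal{O}_{V,v}}$ in the coordinates centred at $v$, and that — because $\pi^{-1}(\pi(v))$ is the orbit $Gv$, on which $G$ acts transitively with stabilizer $G_v$ — passing to completions yields
$$\widehat{\mathcal{O}_{V/G,\pi(v)}}\ \simeq\ \bigl(\widehat{\mathcal{O}_{V,v}}\bigr)^{G_v}\ \simeq\ \widehat{S(V^*)^{G_v}},$$
the completion of the graded ring $S(V^*)^{G_v}$ at its irrelevant ideal. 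Combined with the elementary fact that a graded connected algebra is polynomial if and only if its completion at the irrelevant ideal is regular, this gives the key equivalence: $\pi(v)$ is a smooth point of $V/G$ if and only if $S(V^*)^{G_v}$ is a polynomial ring. Since $V/G$ carries the scaling $\mathbb{G}_m$-action fixing only $\pi(0)$, its singular locus is closed and conical, so $V/G$ is an isolated singularity precisely when $S(V^*)^G$ is not polynomial while $S(V^*)^{G_v}$ is polynomial for every $v\ne 0$.

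To prove $(1)\Rightarrow(2)$, assume $V/G$ is an isolated singularity. For every $v\ne 0$ the ring $S(V^*)^{G_v}$ is polynomial, so (assuming $(1)$) Theorem~\ref{T:KM} applies to $G_v$ and shows $G_v$ is generated by pseudoreflections; these lie in $G$, hence in $H$, so $G_v\subseteq H$ for all $v\ne 0$. Given a nonzero subspace $W$, pick a nonzero $v\in W$: then $G_W\subseteq G_v\subseteq H$, so the pointwise stabilizer of $W$ in $G_v$ equals $G_W$, and the forward direction of Theorem~\ref{T:KM} for $G_v$ (whose invariants are polynomial) gives that $S(V^*)^{G_W}$ is polynomial. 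Thus $H$ is generated by pseudoreflections and the pointwise stabilizer in $H$ of every nonzero subspace $(=G_W)$ has polynomial invariants, so the converse direction of Theorem~\ref{T:KM} for $H$ shows $S(V^*)^H$ is polynomial, i.e. $V/H$ is nonsingular. If $p=\charact k$ divided $|G/H|$ we could choose a $p$-element $g\in G\setminus H$; in characteristic $p$ such $g$ is unipotent, so $\fix(g)\ne 0$, and any nonzero $v\in\fix(g)$ would give $g\in G_v\subseteq H$, a contradiction; hence $p\nmid |G/H|$. Finally $V/G\simeq(V/H)/(G/H)$ is automatic because $H\trianglelefteq G$.

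For $(2)\Rightarrow(1)$ I would verify both directions of Theorem~\ref{T:KM} for an arbitrary finite $G\le GL(V)$. The forward direction does not use $(2)$: if $S(V^*)^G$ is polynomial then $G$ is generated by pseudoreflections by Theorem~\ref{T:CST}, and for any nonzero $W$, since $k$ is infinite and $\fix(g)\cap W\subsetneq W$ whenever $g$ does not fix $W$ pointwise, there is a nonzero $v\in W$ with $G_v=G_W$; then smoothness of $V/G$ at $\pi(v)$ forces $S(V^*)^{G_W}=S(V^*)^{G_v}$ polynomial. For the converse, suppose $G$ is generated by pseudoreflections and every pointwise stabilizer of a nonzero subspace has polynomial invariants; applying this to the lines $\langle v\rangle$ shows $S(V^*)^{G_v}$ is polynomial, hence $\pi(v)$ is smooth, for all $v\ne 0$, so $V/G$ is either nonsingular or an isolated singularity. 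In the latter case $H=G$ (as $G$ is generated by pseudoreflections), and $(2)$ would force $V/H=V/G$ to be nonsingular — a contradiction. Therefore $V/G$ is nonsingular and $S(V^*)^G$ is polynomial.

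The heart of the argument is the local reduction of the first paragraph: it is what allows one to replace $G$ by its stabilizer subgroups $G_v$, which are typically far from irreducible, and this is precisely why statement $(1)$ must be assumed for \emph{all} finite linear groups rather than only in the irreducible case that is already known. I expect the only points needing care to be the identification $\widehat{\mathcal{O}_{V/G,\pi(v)}}\simeq\widehat{S(V^*)^{G_v}}$ — commutation of invariants with completion along the finite orbit $Gv$, and linearity of the $G_v$-action in translated coordinates — together with the standard characterization of polynomial invariant rings through regularity of the completion at the irrelevant ideal.
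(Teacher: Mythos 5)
Your proposal is correct and follows essentially the same route as the paper: the key local identification $\widehat{\mathcal{O}}_{V/G,\pi(v)}\simeq\widehat{S(V^*)^{G_v}}$ is exactly the content of the paper's Lemmas~\ref{L:isomofquots} and~\ref{L:fixator}, your regularity-at-the-irrelevant-ideal criterion is Lemma~\ref{L:regofquot}, and both implications are then run through Theorems~\ref{T:CST} and~\ref{T:KM} in the same way (including the unipotence argument showing every $p$-element lies in some $G_v\subseteq H$). The only cosmetic difference is that you obtain polynomiality of $S(V^*)^{G_W}$ via the forward direction of Theorem~\ref{T:KM} applied to $G_v$, where the paper reads it off directly from smoothness at a generic point of $W$; both are valid.
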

It should be noted that the induced action of $G/H$ on $V/H$ may be nonlinear,
but since the group $G/H$ is nonmodular, its action can be locally formally
linearized in a neighborhood of the fixed point.

In Kemper-Malle Theorem~\ref{T:KM}, the ground field $k$ need not be 
algebraically closed. But it is easy to see that the theorem holds for
general $k$ if and only if it holds for its algebraic closure $\Bar{k}$. Thus
there is no loss of generality in the assumption $k=\Bar{k}$.

In the sequel, we call statement (1) of Theorem~\ref{T:equiv} 
\emph{Kemper-Malle conjecture}. Apart from irreducible groups, G. Kemper and 
G. Malle proved it for groups $G$ acting on $2$-dimensional vector space $V$
(for $2$-dimensional groups in characteristic $p>3$ it was first proved by
H. Nakajima \cite{Nakajima}), for some indecomposable groups, and showed that it 
suffices to prove the conjecture in general indecomposable case (\cite{KM}). We 
are motivated by the problem of classifying the isolated quotient singularities 
and find Kemper-Malle conjecture to be a key for the modular case. Indeed, as 
follows from Theorem~\ref{T:equiv}, the meaning of the conjecture is that if it 
holds, then \emph{essentially modular isolated (linear) quotient singularities 
do not exist}, that is those that exist are in fact isomorphic to nonmodular 
singularities. On the other hand, if statement (2) of Theorem~\ref{T:equiv} 
could be proven with methods of algebraic geometry, this would imply the 
conjecture. We succeeded only in dimension $3$ and odd characteristic $p>2$. 
Our method relies on the classification of $2$-dimensional groups 
generated by transvections and does not generalize to higher dimensions. Our 
main result is the following
\begin{theorem}\label{T:main}
Let $V$ be a $3$-dimensional vector space over an algebraically closed field
of characteristic $p>2$. Let $G$ be a finite subgroup of $GL(V)$ generated
by pseudoreflections. Then if the quotient $V/G$ is singular, the singularity
is not isolated, i.e., Kemper-Malle conjecture holds for such groups.
\end{theorem}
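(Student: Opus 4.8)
The plan is to localize the question at an arbitrary nonzero vector and then reduce it, via the list of two‑dimensional groups generated by transvections, to a finite collection of explicit computations.

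\textbf{Step 1: localization and reduction to a ``bad line''.} For $v\in V$ write $G_v=\{g\in G:gv=v\}$; this subgroup fixes $v$, hence acts \emph{linearly} on $V$. The standard description of completions of quotients by finite groups gives a formal isomorphism $\widehat{\mathcal O_{V/G,\pi(v)}}\simeq\widehat{\mathcal O_{V/G_v,\pi_v(v)}}$, where $\pi_v\colon V\to V/G_v$; recentering coordinates at $v$ identifies the right‑hand side with the completion at the origin of the linear quotient $V/G_v$. Since $S(V^*)^{G_v}$ is a positively graded integral domain of Krull dimension $\dim V$, its spectrum is smooth at the vertex if and only if it is a polynomial ring. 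Hence $V/G$ is singular at $\pi(v)$ precisely when $S(V^*)^{G_v}$ is not polynomial, and the locus $\widetilde\Sigma=\{v\in V:S(V^*)^{G_v}\text{ is not polynomial}\}$ is Zariski closed and, since $G_{cv}=G_v$ for $c\in k^{\times}$, is a cone. Consequently $V/G$ is singular but not isolated if and only if there is some $v\neq 0$ with $S(V^*)^{G_v}$ not polynomial, and then $G_v$ is necessarily \emph{reducible}, being the pointwise stabilizer of the line $kv$. So the theorem amounts to: if $S(V^*)^{G}$ is not polynomial, then some reducible point‑stabilizer $G_v$, $v\neq 0$, has non‑polynomial invariants.

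\textbf{Step 2: the irreducible case.} If $G$ is irreducible and $S(V^*)^{G}$ is not polynomial, Kemper--Malle (Theorem~\ref{T:KM}) yields a nontrivial subspace $U\subseteq V$ whose pointwise stabilizer $G_U$ has non‑polynomial invariants. If $\dim U=1$ we are done by Step~1. If $\dim U=2$, then $G_U$ fixes the hyperplane $U$ pointwise, so every nontrivial element of $G_U$ is a pseudoreflection; its transvection subgroup $T\trianglelefteq G_U$ is an $\Fq$‑submodule of $U$, where $\Fq=\mathbb{F}_p[\zeta]$ for a generator $\zeta$ of the cyclic $p'$‑group $G_U/T\hookrightarrow k^{\times}$, and an orbit‑product (``norm'') computation exhibits $S(V^*)^{T}$ as a polynomial ring on which $G_U/T$ acts as a single pseudoreflection; the nonmodular Chevalley--Shephard--Todd theorem then forces $S(V^*)^{G_U}$ to be polynomial, a contradiction. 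Thus in the irreducible case the required bad line already exists.

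\textbf{Step 3: the reducible case, and the main obstacle.} Here $G$ preserves a subspace $W$ with $1\le\dim W\le 2$, and passing to the induced action on the $2$‑dimensional space $\bar V$ (namely $W$ if $\dim W=2$, or $V/W$ if $\dim W=1$) produces a finite subgroup $\bar G\subseteq GL(\bar V)$, again generated by pseudoreflections, together with triangular ``cocycle'' data describing how $G$ sits over $\bar G$. Because $V/G$ is singular while $G$ is generated by pseudoreflections, the modular part does not disappear, and one must control the transvections that survive in $\bar G$. The key input is the classification of finite subgroups of $GL_2(k)$ generated by transvections in characteristic $p>2$: up to conjugacy and scalars these are the groups $SL_2(\Fq)$ and a short list of their extensions. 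For each entry of the list one writes $S(V^*)^{G}$ as an extension of the known two‑dimensional invariant ring $S(\bar V^*)^{\bar G}$ by the invariants in the remaining direction and checks directly that it is not a polynomial ring; equivalently, one exhibits a nonzero $v$ (on a line of $W$, or on a line whose image in $V/W$ is fixed by a transvection of $\bar G$) for which $S(V^*)^{G_v}$ is non‑polynomial, so that $\widetilde\Sigma\supsetneq\{0\}$ and $V/G$ is singular along a curve. I expect this case‑by‑case analysis to be the main obstacle, especially when $\dim W=1$ and $\bar G$ is as large as $SL_2(\Fq)$: then $\bar V/\bar G$ is itself \emph{smooth}, so non‑smoothness of $V/G$ cannot be seen at the level of $\bar V$ and must be forced by an explicit computation along $L=W$. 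It is precisely this reliance on the two‑dimensional transvection classification that blocks any extension to $\dim V\ge 4$, where one would need the far longer list of three‑dimensional transvection groups.
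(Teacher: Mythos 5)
Your Steps 1 and 2 are sound and essentially reproduce the paper's preliminary reductions: the identification of the complete local ring of $V/G$ at $\pi(v)$ with that of $V/G_v$ is the paper's Lemma~\ref{L:isomofquots}, the criterion ``singular at the vertex iff $S(V^*)^{G}$ is not polynomial'' is Lemma~\ref{L:regofquot}, and the irreducible case is disposed of by Theorem~\ref{T:KM} (your treatment of the sub-case $\dim U=2$ via the fact that a group fixing a hyperplane pointwise has polynomial invariants is fine, and is implicit in the paper's use of \cite{KM} and \cite{CW}).

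The genuine gap is Step 3, which is where all the content of the theorem lives, and which you describe as a plan rather than carry out. Worse, the plan points in the wrong direction. In the critical case --- the modular part $G_p$ of $G$ has a $2$-dimensional invariant subspace $W$ (or a $1$-dimensional one $U$ with $G_p$ irreducible on $V/U$) on which it acts as an irreducible group generated by transvections, i.e.\ as $SL(2,q)$ or $I^*<SL(2,9)$ --- there is no ``bad line'' to exhibit and $S(V^*)^G$ is \emph{not} ``checked directly to be non-polynomial'': the correct resolution, and the actual theorem proved in Section~\ref{S:proof}, is that $V/G$ is \emph{nonsingular} in this case, so the hypothesis of the statement is vacuous there. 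Establishing this smoothness is nontrivial and is not a routine ``extension of the two-dimensional invariant ring by the remaining direction'': one must first show (Lemma~\ref{L:ting} and its sequel, by explicit matrix computations in each of $SL(2,q)$, $I^*$, and the imprimitive groups) that the generating transvections of $H=G_p|_W$ lift to transvections of $G$, deduce that the extension $1\to N\to G_p\to H\to 1$ splits with $H$ acting decomposably on $V$, verify that $S(V^*)^N$ is polynomial, and then show that the induced action of $H$ on $V/N$ is again linear and decomposable with polynomial invariants. Also note that before any of this you must replace $G$ by $G_p$ (the normal subgroup generated by $p$-elements), using that $G/G_p$ is nonmodular and acts linearizably on the smooth $V/G_p$; working with an invariant subspace of $G$ itself, as you do, leaves the non-transvection pseudoreflections entangled in $\bar G$ and does not match the two-dimensional classification you want to invoke. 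Finally, for $\dim W=1$ no classification is needed at all: the paper's Lemma~\ref{L:dim1} handles every group with a one-dimensional invariant subspace by combining the translation argument (Lemma~\ref{L:fixator}) with Chevalley--Shephard--Todd for the nonmodular quotient group, so your worry about $SL_2(\Fq)$ acting on $V/W$ in that sub-case is a detour.
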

We say that two singularities $P\in X$ and $Q\in Y$ are \emph{formally 
isomorphic}, if there exists an isomorphism of complete local rings
$\widehat{\mathcal{O}_{X,P}}\simeq \widehat{\mathcal{O}_{Y,Q}}$.
\begin{corollary}
If $\dim V=3$, $p>2$, and $G$ is a finite subgroup of $GL(V)$ such that 
$0\in V/G$ is an isolated singularity, then $0\in V/G$ is formally isomorphic
to one of nonmodular isolated quotient singularities in Zassenhaus-Vincent-Wolf
classification.
\end{corollary}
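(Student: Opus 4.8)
The plan is to deduce the statement from the main Theorem~\ref{T:main} together with the equivalence Theorem~\ref{T:equiv}. The first step is to promote Theorem~\ref{T:main} to the full Kemper--Malle conjecture (statement~(1) of Theorem~\ref{T:equiv}) for \emph{every} finite subgroup $G\le GL(V)$ with $\dim V=3$ and $p>2$, not merely those generated by pseudoreflections. If $G$ is not generated by pseudoreflections, then by the Chevalley--Shephard--Todd Theorem~\ref{T:CST} the ring $S(V^*)^G$ is not polynomial, so both sides of the Kemper--Malle biconditional are false and the biconditional holds trivially; the same remark disposes of any pointwise stabilizer $G_U$ occurring in the statement that fails to be generated by pseudoreflections. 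Hence the conjecture in dimension $3$ and odd characteristic reduces to the case of groups generated by pseudoreflections, which is exactly what Theorem~\ref{T:main} asserts; and since every such $G_U$ is again a subgroup of $GL(V)$, the reduction stays within dimension $3$ and produces no new cases.

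Next I would invoke Theorem~\ref{T:equiv}. By the previous step the Kemper--Malle conjecture is available in every dimension $\le 3$ in characteristic $p>2$: in dimensions $\le 2$ it is the theorem of Kemper--Malle and Nakajima recalled in the introduction, and in dimension $3$ it is the content just established. Applying the implication (1)$\Rightarrow$(2) of Theorem~\ref{T:equiv} to the given group $G$ with $\dim V=3$ and $0\in V/G$ an isolated singularity, one obtains that, for $H$ the normal subgroup of $G$ generated by pseudoreflections, $V/H$ is nonsingular, $p\nmid|G/H|$, and $V/G\simeq(V/H)/(G/H)$. Here one should check that the proof of Theorem~\ref{T:equiv}, when carried out for a $3$-dimensional $G$, only appeals to the Kemper--Malle conjecture for groups acting on $V$ and on its subspaces, all of dimension $\le 3$, so that having the conjecture only in these dimensions suffices.

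Finally I would identify the singularity formally. Since $V/H$ is a smooth threefold and $G/H$ is a \emph{nonmodular} finite group acting on it with $0$ as a fixed point, its action is formally linearizable near $0$, as noted after Theorem~\ref{T:equiv}; hence $\widehat{\mathcal{O}_{V/G,0}}$ is isomorphic to the complete local ring at the origin of a linear quotient $W/(G/H)$ with $\dim W=3$ and $p\nmid|G/H|$, and this quotient singularity is again isolated because formal isomorphism preserves isolatedness (it is genuinely singular, since otherwise $V/G$ would be smooth at $0$). A nonmodular isolated linear quotient singularity over an algebraically closed field falls under the Zassenhaus--Vincent--Wolf classification, as recalled in the introduction; composing the two isomorphisms of complete local rings gives the claim.

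The genuine mathematical content is Theorem~\ref{T:main}, which is taken for granted here, so the hard part of the corollary is already done. What remains is bookkeeping, and the points that require care are: that the reduction of the Kemper--Malle biconditional to the pseudoreflection case is clean (it rests solely on Theorem~\ref{T:CST}), and that the argument for Theorem~\ref{T:equiv} does not secretly require the Kemper--Malle conjecture outside dimensions $\le 3$. One should also keep in mind the standard facts --- that taking invariants commutes with completion along a finite orbit, and that a stabilizer subgroup acts linearly in coordinates centred at a fixed point --- which make the local (slice) reductions inside Theorem~\ref{T:equiv} valid in the modular setting as well.
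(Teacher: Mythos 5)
Your argument is correct and follows exactly the route the paper intends for this (unproved-in-detail) corollary: Theorem~\ref{T:main} yields the Kemper--Malle conjecture in dimension $3$ and odd characteristic (the non-pseudoreflection case being vacuous by the forward direction of Theorem~\ref{T:CST}), the implication $(1)\Rightarrow(2)$ of Theorem~\ref{T:equiv} then reduces $V/G$ to a quotient of the smooth $V/H$ by the nonmodular group $G/H$, and formal linearization plus the Zassenhaus--Vincent--Wolf classification finish the proof. Your extra checks (that the proof of Theorem~\ref{T:equiv} only invokes the conjecture in dimension $3$, and that isolatedness passes through the formal isomorphism) are the right points to verify and both go through.
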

In fact, our result is a bit stronger than Theorem~\ref{T:main} and is
also applicable to some groups in even characteristic; see 
Section~\ref{S:proof} for the precise formulation.

The paper is organized as follows. In Section~\ref{S:prelim} we collect
some material on quotient singularities used in the consequent sections. 
In Section~\ref{S:KMandIQS} we prove Theorem~\ref{T:equiv}. We also
show that Kemper-Malle conjecture holds for groups $G$ possessing a 
$1$-dimensional invariant subspace, in particular, for Abelian $G$.
Section~\ref{S:proof} is devoted to the proof of Theorem~\ref{T:main}.
%Some particular cases of even characteristic where we can prove our theorem
%are also discussed in Section~\ref{S:proof}.

The author is pleased to thank Professors G. Kemper and V.~L. Popov for
providing him with useful information on modular group actions.

\section{Preliminaries}\label{S:prelim}

The results of this section have already appeared in \cite{Kemper}. For the reader's 
convenience, we state them in our notation and with complete proofs. If not stated 
otherwise, $k$ denotes in this section a field of characteristic $p\geq 0$, not 
necessarily algebraically closed.

\begin{lemma}\label{L:regofquot}
Let a finite group $G$ act linearly on the polynomial ring 
$R=k[x_1,\dots,x_n]$. Then the ring of invariants $R^G$ is polynomial if
and only if $R^G$ is regular at the maximal ideal $\mathfrak{m}\cap R^G$,
where $\mathfrak{m}=(x_1,\dots,x_n)$.
\end{lemma}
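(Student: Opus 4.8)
The plan is to prove the two implications separately; the forward direction is immediate, and the substance lies in the converse, which I would deduce from the graded Nakayama lemma. If $R^G$ is a polynomial ring it is regular at every prime ideal, in particular at $\mathfrak{m}\cap R^G$, so that direction needs nothing further.

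For the converse I would first record the structural facts. Since $G$ acts by linear substitutions in $x_1,\dots,x_n$, the grading of $R$ descends to $R^G$, which is therefore a graded $k$-algebra with $(R^G)_0=k$; it is a finitely generated $k$-algebra by Noether's theorem (valid for any finite group, cf.\ \cite{Benson}), and $\dim R^G=\dim R=n$ because $R$ is module-finite, hence integral, over $R^G$. Put $\mathfrak{n}=\mathfrak{m}\cap R^G$. A $G$-invariant polynomial lies in $\mathfrak{m}=(x_1,\dots,x_n)$ precisely when its constant term vanishes, so $\mathfrak{n}=\bigoplus_{i>0}(R^G)_i$ is the irrelevant maximal ideal. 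Assume now that $(R^G)_{\mathfrak{n}}$ is regular. Because $R^G/\mathfrak{n}=k$, the space $\mathfrak{n}/\mathfrak{n}^2$ is already a (graded) $k$-vector space and is unchanged by localization at $\mathfrak{n}$; hence regularity yields $\dim_k\mathfrak{n}/\mathfrak{n}^2=\dim(R^G)_{\mathfrak{n}}=n$.

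I would then conclude by graded Nakayama. Pick homogeneous elements $f_1,\dots,f_n\in\mathfrak{n}$ whose residues form a $k$-basis of $\mathfrak{n}/\mathfrak{n}^2$; they generate the ideal $\mathfrak{n}$, and since $R^G=k\oplus\mathfrak{n}$ they generate $R^G$ as a $k$-algebra. Assigning $y_i$ the degree of $f_i$, this gives a surjection of graded $k$-algebras $\varphi\colon k[y_1,\dots,y_n]\twoheadrightarrow R^G$. As $R^G\subseteq R$ is a domain, $\ker\varphi$ is prime; were it nonzero it would have height at least one, forcing $\dim R^G\le n-1$, which contradicts $\dim R^G=n$. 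Therefore $\varphi$ is an isomorphism and $R^G$ is polynomial. The only mild points of care are identifying $\mathfrak{m}\cap R^G$ with the irrelevant ideal and noting the invariance of $\mathfrak{n}/\mathfrak{n}^2$ under localization; the least mechanical step is the graded Nakayama reduction, and I anticipate no real obstacle, expecting \cite{Kemper} to treat the finite generation of $R^G$ and graded Nakayama as standard. In essence the lemma records that a finitely generated positively graded domain over $k$ with degree-zero part $k$ is a polynomial ring exactly when it is regular at its vertex.
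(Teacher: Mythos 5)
Your proof is correct, and it is the direct (constructive) version of what the paper does by contradiction. The paper takes a \emph{minimal} homogeneous generating set $f_1,\dots,f_m$ of $R^G$, observes that minimality forces every relation among them to have no linear term, and then invokes the Jacobian criterion on the presentation $k[y_1,\dots,y_m]/(g_1,\dots,g_r)\simeq R^G$ to conclude that the embedding dimension at the vertex is $m>n=\dim R^G$, contradicting regularity. You run the same comparison of embedding dimension against Krull dimension in the other direction: regularity gives $\dim_k\mathfrak{n}/\mathfrak{n}^2=n$, graded Nakayama lifts a basis of $\mathfrak{n}/\mathfrak{n}^2$ to $n$ homogeneous algebra generators, and the primality of the kernel of $k[y_1,\dots,y_n]\twoheadrightarrow R^G$ together with $\dim R^G=n$ rules out any relation. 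The two arguments rest on the same underlying fact --- for a positively graded domain with $(R^G)_0=k$, the minimal number of algebra generators equals $\dim_k\mathfrak{n}/\mathfrak{n}^2$ --- but your packaging avoids the Jacobian criterion (whose characteristic-$p$ subtleties are irrelevant here but still invite a pause) and makes explicit the identification of $\mathfrak{m}\cap R^G$ with the irrelevant ideal and the equality $\operatorname{ht}(\mathfrak{n})=n$, both of which the paper uses tacitly. Either route is fine; yours is marginally more self-contained.
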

\begin{proof}
Only the sufficiency needs a proof. Suppose that $R^G$ is regular at 
$\mathfrak{m}\cap R^G$, but $R^G$ is not polynomial. But $R^G$ is a finitely
generated $k$-algebra (see, e.g., \cite{Benson}, Theorem~1.3.1), so let $f_1$, 
$\dots$, $f_m$ be a minimal set of generators of $R^G$. Let $g_1(y_1,\dots,y_m)$, 
$\dots$, $g_r(y_1,\dots,y_m)$ be a generating set of all relations between $f_1$,
$\dots$, $f_m$. Note that all the polynomials $f_i$, $i=1,\dots,m$, can be 
chosen homogeneous of positive degree, whereas $g_j$, $j=1,\dots,r$,  weighted 
homogeneous with 
$$\text{weight of }y_i=\deg f_i.$$
It follows that $g_j(0,\dots,0)=0$ for all $j=1,\dots,r$. Moreover, all the
monomials of $g_j$ have degree $>1$, because otherwise the set of generators 
$f_1$, $\dots$, $f_m$ would not be minimal. Thus by Jacobian criterion the 
ring 
$$k[y_1,\dots,k_m]\simeq R^G$$
is not regular at $0$, a contradiction.
\end{proof}

Now consider two algebras $A$ and $B$ without zero divisors over a 
field $k$, where $A$ is a subalgebra of $B$. Let $\mathfrak{m}\subset A$ and
$\mathfrak{n}\subset B$ be maximal ideals such that $\mathfrak{n}\cap A=
\mathfrak{m}$. Denote by $j\colon A\to B$ the inclusion, by $\widehat{A}$ and
$\widehat{B}$ the formal completions of $A$ at $\mathfrak{m}$ and $B$ at
$\mathfrak{n}$ respectively, and by $\widehat{j}\colon \widehat{A}\to 
\widehat{B}$ the map induced by $j$.

\begin{lemma}\label{L:alglem}
Assume that the following conditions are satisfied: 
\begin{enumerate}
\item $A$ and $B$ are Noetherian;
\item $A/\mathfrak{m}=B/\mathfrak{n}=k$;
\item $B$ is unramified at $\mathfrak{n}$ over $A$, that is
$\mathfrak{m}B_\mathfrak{n}=\mathfrak{n}B_\mathfrak{n}$,
where $B_\mathfrak{n}$ is the localization of $B$ at $\mathfrak{n}$.
\end{enumerate}
Then $\widehat{j}$ is an isomorphism.
\end{lemma}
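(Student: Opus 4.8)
The plan is to reduce the assertion to a statement about completions of the local rings $A_{\mathfrak{m}}$ and $B_{\mathfrak{n}}$ and then prove $\widehat{j}$ bijective by treating surjectivity and injectivity separately. Since $\widehat{A}$ depends only on $A_{\mathfrak{m}}$ and $\widehat{B}$ only on $B_{\mathfrak{n}}$, I may assume from the outset that $A$ and $B$ are Noetherian \emph{local} rings with maximal ideals $\mathfrak{m}$, $\mathfrak{n}$. By hypothesis~(3) we have $\mathfrak{m}B=\mathfrak{n}$, hence $\mathfrak{m}^{k}B=\mathfrak{n}^{k}$ for all $k$, so the $\mathfrak{m}$-adic and $\mathfrak{n}$-adic topologies on $B$ coincide. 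Consequently $\widehat{B}$ is also the $\mathfrak{m}$-adic completion of $B$, it is naturally an $\widehat{A}$-algebra which is complete and separated for the $\mathfrak{m}\widehat{A}$-adic topology, and by~(2) its reduction is $\widehat{B}/\mathfrak{m}\widehat{B}=B/\mathfrak{n}B=k=\widehat{A}/\mathfrak{m}\widehat{A}$.

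For surjectivity I would apply the complete form of Nakayama's lemma: $\widehat{B}$ is an $\widehat{A}$-module which is $\mathfrak{m}\widehat{A}$-adically complete and separated, and $\widehat{B}/\mathfrak{m}\widehat{B}$ is spanned over $k$ by the image of $1$; therefore $\widehat{B}=\widehat{A}\cdot 1$, i.e.\ $\widehat{j}$ is onto. Equivalently, the induced map of associated graded rings $\operatorname{gr}_{\mathfrak{m}}\widehat{A}\to\operatorname{gr}_{\mathfrak{n}}\widehat{B}$ is surjective, since $\mathfrak{n}^{k}\widehat{B}/\mathfrak{n}^{k+1}\widehat{B}=\mathfrak{m}^{k}\widehat{B}/\mathfrak{m}^{k+1}\widehat{B}$ is a quotient of $\mathfrak{m}^{k}\widehat{A}/\mathfrak{m}^{k+1}\widehat{A}$, and a homomorphism of complete filtered rings surjective on $\operatorname{gr}$ is itself surjective. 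In particular $\widehat{B}\cong\widehat{A}/I$ with $I=\ker\widehat{j}$ a finitely generated ideal of $\widehat{A}$.

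The main obstacle is injectivity, i.e.\ showing $I=0$. From $A\hookrightarrow B$ together with faithful flatness of $A\to\widehat{A}$ and $B\to\widehat{B}$ one sees that $A$ embeds into $\widehat{B}$, so $I\cap A=0$ inside $\widehat{A}$; but this alone does \emph{not} force $I=0$ (it fails already when $A$ is the local ring of a node, where $\widehat{A}$ picks up zero divisors). To conclude, one has to show moreover that $\operatorname{gr}(\widehat{j})$ is injective, equivalently that $\mathfrak{n}^{k}B\cap A=\mathfrak{m}^{k}$ for every $k$, so that the leading form of a nonzero element of $A$ survives. This is the crux and is where a little more than bare unramifiedness is needed: it suffices, for instance, either that the inclusion be flat --- hence \'etale --- at $\mathfrak{n}$, in which case $\widehat{B}=\widehat{A}\otimes_{A}B$ by faithfully flat base change and injectivity is immediate, or that $\widehat{A}$ be a domain (analytic irreducibility of $A$) together with $\dim B=\dim A$, so that the surjection $\widehat{A}\twoheadrightarrow\widehat{B}$ of equidimensional rings with domain source has zero kernel. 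Both conditions are satisfied in the applications intended here --- $A$ a localized ring of invariants, hence normal and excellent, and $B$ module-finite over $A$ --- and granting them, $\widehat{j}$ is an isomorphism.
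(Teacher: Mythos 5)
Your treatment of surjectivity coincides, up to packaging, with the paper's: the paper also passes to the graded pieces $j_n\colon\mathfrak{m}^n/\mathfrak{m}^{n+1}\to\mathfrak{n}^n/\mathfrak{n}^{n+1}$, deduces surjectivity of each $j_n$ from $\mathfrak{m}B_\mathfrak{n}=\mathfrak{n}B_\mathfrak{n}$, and concludes via \cite{AM}, Lemma~10.23; your complete-Nakayama formulation is the same argument in different clothing.

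The real issue is injectivity, and there your hesitation is exactly right. The paper attempts a direct proof: starting from $a\in\mathfrak{m}^n\setminus\mathfrak{m}^{n+1}$ with $a\in\mathfrak{n}^{n+1}$ it manufactures nonzero elements of $\mathfrak{m}^n\setminus\mathfrak{m}^{n+1}$ lying in $\mathfrak{n}^N$ for arbitrarily large $N$, and then argues that the descending chain of images $V_N=(\mathfrak{n}^N\cap\mathfrak{m}^n+\mathfrak{m}^{n+1})/\mathfrak{m}^{n+1}$ stabilizes at a nonzero subspace $W$, which is said to contradict Krull's theorem $\bigcap_N\mathfrak{n}^N=0$. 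That last step is a non sequitur: images in a finite-dimensional quotient do not commute with infinite intersections, so a nonzero stable $W$ is compatible with $\bigcap_N\mathfrak{n}^N=0$. In fact the lemma as stated is false, by essentially the example you gestured at: take $B=k[t]$, $A=\{f\in k[t]:f(1)=f(-1)\}$ (the nodal cubic, $\charact k\ne 2$), $\mathfrak{n}=(t-1)$, $\mathfrak{m}=\mathfrak{n}\cap A=(t^2-1)k[t]$. Conditions (1)--(3) hold (note $t+1$ is a unit in $B_\mathfrak{n}$), yet $\widehat{A}\cong k[[u,v]]/(uv)$ maps onto $\widehat{B}\cong k[[t-1]]$ with nonzero kernel; concretely $(t-1)^N(t+1)$ lies in $\mathfrak{m}\setminus\mathfrak{m}^2$ and in $\mathfrak{n}^N$ for every $N$. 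So a supplementary hypothesis of the kind you list is genuinely needed --- analytic irreducibility of $A_\mathfrak{m}$ together with $\dim A_\mathfrak{m}=\dim B_\mathfrak{n}$, or flatness --- and both options are available in the only use made of this lemma, namely Lemma~\ref{L:isomofquots}, where $A=S(V^*)^G\subset B=S(V^*)^H$ are normal affine domains with $B$ finite over $A$. The one criticism of your write-up is formal: as submitted it does not prove the lemma as stated (nothing can), and the appeal to ``the applications intended here'' should be converted into explicit extra hypotheses in the statement; with that emendation your argument is complete and, unlike the paper's, correct.
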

\begin{proof}
Let
$$j_n\colon \mathfrak{m}^n/\mathfrak{m}^{n+1}\to 
\mathfrak{n}^n/\mathfrak{n}^{n+1},\: n\geq 0,$$
be the natural map induced by $j$. By \cite{AM}, Lemma~10.23, it is enough to
show that $j_n$ is an isomorphism for each $n\geq 0$. Note that
$$\mathfrak{n}^n/\mathfrak{n}^{n+1}\simeq 
(\mathfrak{n}B_\mathfrak{n})^n / (\mathfrak{n}B_\mathfrak{n})^{n+1},$$
thus surjectivity of $j_n$ follows easily from condition (3). Let us
prove injectivity of $j_n$.

Assume on the contrary that $j_n$ is not injective for some $n\geq 0$. This
means that there is an element $a\in\mathfrak{m}^n$ such that $a\notin
\mathfrak{m}^{n+1}$, but $a\in\mathfrak{n}^{n+1}$. Considering $a$ as an
element of $B_\mathfrak{n}$ and using again condition (3), we can write $a$
as
$$a=\sum b_i a_i,$$
where $b_i\in B_\mathfrak{n}$, $a_i\in\mathfrak{m}^{n+1}$. Now use condition 
(2) and the fact that the field $k$ is contained in $A$ and $B$ to write 
$b_i=b_{i}^{0}+b'_{i}$, $b_{i}^{0}\in k$, $b'_{i}\in
\mathfrak{n}B_\mathfrak{n}$. This allows to rewrite $a$ as
$$a=\sum b_{i}^{0}a_i + b',$$
where, on the one hand, $b'$ is an element of $A$ not contained in
$\mathfrak{m}^{n+1}$ (otherwise we would have $a\in\mathfrak{m}^{n+1}$), on the
other hand, $b'$ is contained in $(\mathfrak{n}B_\mathfrak{n})^{n+2}\cap B =
\mathfrak{n}^{n+2}$. Moreover, $b'\ne 0$ because otherwise $a$ would not be 
contained in $\mathfrak{n}^{n+1}$. Applying the same argument to $b'$ we get 
$0\ne b''\in\mathfrak{n}^{n+3}$, $b''\in\mathfrak{m}^n$, $b''\notin
\mathfrak{m}^{n+1}$. It follows that $\mathfrak{m}^n\setminus
\mathfrak{m}^{n+1}$ contains nonzero elements of $\mathfrak{n}^N$ for
arbitrary large $N$.

The ideals $\mathfrak{n}^N$ are $k$-vector subspaces of $B$. Condition (1)
implies that $V=\mathfrak{m}^n/\mathfrak{m}^{n+1}$ is a finite dimensional 
$k$-vector space. Thus 
$$V_N=(\mathfrak{n}^N\cap \mathfrak{m}^n)/\mathfrak{m}^{n+1},\: N>n+1,$$
is a descending sequence of subspaces of $V$. This sequence stabilizes on some
subspace $W$ of $V$. We saw above that each member of the sequence has a 
nonzero element, thus $W\ne 0$. On the other hand, by Krull's theorem
$\cap_{n\geq 0}\mathfrak{n}^n=(0)$, so we must have $W=0$. This contradiction
proves injectivity of $j_n$ and hence the lemma.
\end{proof}

Algebraic Lemma~\ref{L:alglem} can be applied to quotient singularities in
the following way. Let $G$ be a finite group that acts linearly on a vector
space $V$. Let $P\in V$ be a (closed) point and $H$ the stabilizer of
$P$ in $G$. Consider the quotients $V/H=\spec S(V^*)^H$ and $V/G=
\spec S(V^*)^G$. Let $Q\in V/H$ and $R\in V/G$ be the images of $P$ under the
natural projections $p_H\colon V\to V/H$ and $p_G\colon V\to V/G$. In general 
$H$ is not normal in $G$, so there is no natural group action on $V/H$, but 
anyway there is a morphism $\varphi\colon V/H\to V/G$ that makes the following 
diagram commutative:
$$\begin{xymatrix}{
&V \ar[dd]_{p_G} \ar[rd]^{p_H} & \\
& &V/H \ar[ld]^{\varphi} \\
&V/G &
} 
\end{xymatrix}
$$
Let $\widehat{\mathcal{O}}_{V/H,Q}$ and $\widehat{\mathcal{O}}_{V/G,R}$ be the
complete local rings of the points $Q\in V/H$ and $R\in V/G$ respectively.

\begin{lemma}\label{L:isomofquots}
Assume that the ground field $k$ is infinite. Then the map
$$\widehat{\varphi}\colon
\widehat{\mathcal{O}}_{V/H,Q}\to \widehat{\mathcal{O}}_{V/G,R}$$
induced by $\varphi$ is an isomorphism.
\end{lemma}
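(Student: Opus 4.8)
The plan is to reduce the statement to the algebraic Lemma~\ref{L:alglem}, applied to the pair $A=S(V^*)^G\subseteq B=S(V^*)^H$ (an inclusion since $H\leq G$). Both are finitely generated $k$-algebras, hence Noetherian domains; writing $\mathfrak{m}=\mathfrak{m}_R\subset A$ and $\mathfrak{n}=\mathfrak{m}_Q\subset B$ for the maximal ideals of the images of $P$, we have $\mathfrak{n}\cap A=\mathfrak{m}$ because $\varphi(Q)=R$, and $A/\mathfrak{m}=B/\mathfrak{n}=k$ because $P$ (hence $Q$, $R$) has residue field $k$; so conditions (1) and (2) of Lemma~\ref{L:alglem} hold. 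Under the identifications $\widehat{A}=\widehat{\mathcal{O}}_{V/G,R}$, $\widehat{B}=\widehat{\mathcal{O}}_{V/H,Q}$ the map $\widehat{j}$ of that lemma is the one induced by $\varphi$, so the whole statement comes down to checking condition (3): $\mathfrak{m}_RB_{\mathfrak{n}}=\mathfrak{n}B_{\mathfrak{n}}$, i.e. that $\varphi$ is unramified at $Q$.

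To verify this, let $P=P_1,P_2,\dots,P_s$ be the $G$-orbit of $P$ (so $s=[G:H]$ since $H=\operatorname{Stab}_G(P)$), and fix $f\in\mathfrak{n}=\mathfrak{m}_Q\subset S(V^*)^H$. Since $f$ is constant on $H$-orbits and $H$ fixes $P$, form the monic characteristic polynomial $\chi_f(T)=\prod_{gH\in G/H}\bigl(T-g\!\cdot\! f\bigr)=T^s-\tau_1T^{s-1}+\dots+(-1)^s\tau_s$; its coefficients $\tau_j$ lie in $S(V^*)^G$ because $G$ permutes the conjugates $g\!\cdot\! f$, and $f$ is a root. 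Dividing $\chi_f(f)=0$ by $f$ gives $f\cdot g=(-1)^{s+1}\tau_s$ with $g=f^{s-1}-\tau_1f^{s-2}+\dots+(-1)^{s-1}\tau_{s-1}\in S(V^*)^H$. Now $\tau_s=\prod_{gH}(g\!\cdot\! f)$ vanishes at $P$ (the identity coset contributes $f(P)=0$), so $\tau_s\in\mathfrak{m}_R$; and $g(P)=(-1)^{s-1}\tau_{s-1}(P)$, while $\tau_{s-1}(P)$ is the product of the values $(g\!\cdot\! f)(P)=f(g^{-1}P)$ over the $s-1$ nonidentity cosets, i.e. (up to repetitions) a product of values of $f$ at the orbit points $P_2,\dots,P_s$. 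Hence: \emph{if $f(P_i)\neq0$ for every $i\geq2$}, then $g\notin\mathfrak{n}$, so $g$ is a unit in $B_{\mathfrak{n}}$ and $f=(-1)^{s+1}\tau_sg^{-1}\in\mathfrak{m}_RB_{\mathfrak{n}}$.

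It remains to reduce a general $f\in\mathfrak{m}_Q$ to this case, and here the infinitude of $k$ is used. For each $H$-orbit $O\subseteq\{P_2,\dots,P_s\}$, the $H$-invariant $u_O=\prod_{h\in H}h\!\cdot\! v_O$, where $v_O\in S(V^*)$ takes the value $1$ on $O$ and $0$ on the other $P_i$, equals $1$ on $O$ and $0$ on the remaining orbit points; in particular $u_O\in\mathfrak{m}_Q$. Since every element of $S(V^*)^H$ is constant on $H$-orbits, it follows that $\mathfrak{m}_Q$ surjects onto $\prod_O k$, the product over the $H$-orbits $O\subseteq\{P_2,\dots,P_s\}$. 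As $k$ is infinite, a finite-dimensional $k$-space is not covered by finitely many proper affine subspaces, so any $f\in\mathfrak{m}_Q$ can be split as $f=f_1+f_2$ with $f_1,f_2\in\mathfrak{m}_Q$ and $f_1(P_i),f_2(P_i)\neq0$ for all $i\geq2$; by the previous paragraph $f_1,f_2\in\mathfrak{m}_RB_{\mathfrak{n}}$, hence $f\in\mathfrak{m}_RB_{\mathfrak{n}}$. This gives condition (3), and Lemma~\ref{L:alglem} then shows that $\widehat{\varphi}$ is an isomorphism. The main obstacle is precisely this last reduction: the characteristic-polynomial identity only pins $f$ down when $f$ is nonzero on the rest of the orbit, and — there being no Reynolds operator in the modular case to average away this restriction — one must instead manufacture enough separating $H$-invariants (which is what forces $k$ to be infinite) and use them to decompose $f$.
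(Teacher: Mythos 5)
Your proposal is correct and follows essentially the same route as the paper: both reduce the statement to Lemma~\ref{L:alglem} and verify the unramifiedness condition (3) by descending an $H$-invariant $f$ that vanishes at $P$ but not on the rest of the $G$-orbit to a $G$-invariant via its norm (your $\tau_s$ is the paper's $N_{H}^{G}(f)$, and the cofactor is a unit in $B_{\mathfrak{n}}$ for the same reason in both arguments). The only divergence is how the nonvanishing hypothesis is arranged --- the paper perturbs a finite generating set of $\mathfrak{m}_Q$ by multiples of a single auxiliary invariant $L=\prod_{h\in H}l(hv)$, whereas you split each individual element of $\mathfrak{m}_Q$ as a sum of two elements nonvanishing off $P$ using orbit-separating invariants --- and both uses of the infinitude of $k$ are sound.
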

\begin{proof}
Let $\mathfrak{m}_P$ be the maximal ideal of the point $P\in V$, 
$\mathfrak{m}_Q=\mathfrak{m}_P \cap S(V^*)^H$, $\mathfrak{m}_R=\mathfrak{m}_P
\cap S(V^*)^G$. Then apply Lemma~\ref{L:alglem} to the algebras $A=S(V^*)^G$,
$B=S(V^*)^H$, and the ideals $\mathfrak{m}_R$ and $\mathfrak{m}_Q$. We have to
check only condition (3) of Lemma~\ref{L:alglem}, i.e., that $B$ is
unramified at $\mathfrak{m}_Q$ over $A$.

First let us show that the ideal $\mathfrak{m}_Q$ is generated by polynomials
$f_1$, $\dots$, $f_n\in S(V^*)$ such that for all $i$, $1\leq i\leq n$,
$f_i(P)=0$, but for all $g\in G$, $g\notin H$, $f_i(gP)\ne 0$. Indeed, choose
a linear function $l$ on $V$ such that $l(P)=0$ but $l(gP)\ne 0$ for all
$g\in G$, $g\notin H$. Such choice is possible since $k$ is infinite. Consider
an invariant
$$L(v)=\prod_{h\in H} l(hv)$$
of the group $H$. Suppose now that $f'_1$, $\dots$, $f'_{n-1}$ is any 
generating set of the ideal $\mathfrak{m}_Q$. Then one may choose $c_1$, 
$\dots$, $c_{n-1}\in k$ so that the system
$$f_1=f'_1+c_1L,\:\dots,\:f_{n-1}=f'_{n-1}+c_{n-1}L,\: f_n=L$$
of generators of $\mathfrak{m}_Q$ has the desired property.

Now for each $i$, $1\leq i\leq n$, consider
$$g_i(v)=N_{H}^{G}(f_i)(v)=\prod_g f_i(gv),$$
where $g$ runs over a system of representatives of all right classes of $G$
modulo $H$. By construction $g_i$ is an invariant of $G$ and belongs to
$\mathfrak{m}_R$. Since $f_i$ is an invariant of $H$, 
$$\frac{g_i}{f_i}=\prod_{g\notin H} f_i(gv),$$
where the product is taken over all representatives of nontrivial right
classes of $G$ modulo $H$, is also an invariant of $H$. Moreover, 
$g_i/f_i(Q)\ne 0$, thus it is a unit in the local ring $\mathcal{O}_{V/H,Q}$.
It follows that $g_1$, $\dots$, $g_n$ generate the ideal 
$\mathfrak{m}_Q\cdot\mathcal{O}_{V/H,Q}$.
\end{proof}

\begin{lemma}\label{L:fixator}
Let a finite group $G$ act linearly on a finite dimensional vector space $V$.
Suppose that $W$ is a subspace of $V$ which is pointwise fixed by $G$. Let
$P_1\in W$ and $P_2\in W$ be two (closed) points, and $Q_1\in V/G$, $Q_2\in V/G$
their images under the natural projection $p\colon V\to V/G$. Then the local 
rings of $Q_1$ and $Q_2$ are isomorphic:
$$\mathcal{O}_{V/G,Q_1}\simeq\mathcal{O}_{V/G,Q_2}.$$
\end{lemma}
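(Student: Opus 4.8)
The plan is to exploit the fact that translation in the fixed directions commutes with the group action, and hence descends to an automorphism of the quotient carrying $Q_1$ to $Q_2$. Put $w=P_2-P_1$; since $W$ is a linear subspace containing $P_1$ and $P_2$, this is a well-defined vector of $W$ (we may assume $P_1$, $P_2$ are $k$-rational points — otherwise pass to a finite extension of $k$, and note that in our applications $k=\bar k$ anyway). Let $\tau_w\colon V\to V$, $v\mapsto v+w$, be the corresponding translation; it is an automorphism of the affine variety $V$, with inverse $\tau_{-w}$, and on the coordinate algebra $S(V^*)$ it is the $k$-algebra automorphism $\tau_w^*$ sending a linear form $l\in V^*$ to $l+l(w)\cdot 1$.

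First I would check that $\tau_w$ is $G$-equivariant. For $g\in G$ and $v\in V$ one has
$$g(\tau_w v)=g(v+w)=gv+gw=gv+w=\tau_w(gv),$$
the third equality holding precisely because $w\in W$ is fixed pointwise by $G$; this is the only place where the hypothesis on $W$ is used. Equivalently, $\tau_w^*$ commutes with the action of $G$ on $S(V^*)$, which one verifies directly on linear forms using $g^{-1}w=w$. Consequently $\tau_w^*$ preserves the subalgebra $S(V^*)^G$ of invariants, and $\tau_w$ descends to an automorphism $\bar\tau_w\colon V/G\to V/G$ satisfying $p\circ\tau_w=\bar\tau_w\circ p$, where $p\colon V\to V/G$ denotes the projection.

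It remains to track the point $Q_1$. From $\tau_w(P_1)=P_1+w=P_2$ we get $\bar\tau_w(Q_1)=\bar\tau_w(p(P_1))=p(\tau_w(P_1))=p(P_2)=Q_2$. In ring-theoretic terms: every $f\in S(V^*)^G$ satisfies $f(Q_i)=f(P_i)$, so $\tau_w^*(f)(P_1)=f(P_1+w)=f(P_2)=f(Q_2)$, and therefore $\tau_w^*$ maps the maximal ideal $\mathfrak{m}_{Q_2}\subset S(V^*)^G$ onto $\mathfrak{m}_{Q_1}$; being an automorphism of $S(V^*)^G$, it induces an isomorphism of localizations $\mathcal{O}_{V/G,Q_2}\xrightarrow{\sim}\mathcal{O}_{V/G,Q_1}$, which is the assertion. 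There is no real obstacle here: the only point deserving a moment's care is that the affine, non-linear automorphism $\tau_w$ nevertheless commutes with the linear action of $G$ — which is exactly the content of the fixed-subspace hypothesis — and that, just as for the map in Lemma~\ref{L:isomofquots}, the induced automorphism $\bar\tau_w$ of $V/G$ need not itself be linear.
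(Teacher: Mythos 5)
Your proposal is correct and follows exactly the same route as the paper: translation by the vector $w=P_2-P_1\in W$ commutes with the $G$-action because $W$ is pointwise fixed, hence descends to an automorphism of $V/G$ carrying $Q_1$ to $Q_2$. You merely spell out the equivariance check and the ring-theoretic bookkeeping that the paper leaves implicit.
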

\begin{proof}
Let $v$ be a vector of $V$ that joins $P_1$ to $P_2$: $P_1+v=P_2$. The 
translation by vector $v$ is an automorphism of $V$ as a scheme. Since $v\in W$, 
this  translation can be pushed forward along $p$ to an automorphism of $V/G$ 
that maps the point $Q_1$ to $Q_2$. The lemma follows.
\end{proof}

\section{Kemper-Malle conjecture and isolated quotient singularities}
\label{S:KMandIQS}
In this section the field $k$ will be assumed algebraically closed. Let us 
prove Theorem~\ref{T:equiv}. First consider the implication $(2)\Rightarrow(1)$.
Let $G$ be a subgroup of $GL(V)$ generated by pseudoreflections. By 
Lemma~\ref{L:regofquot}, the ring $S(V^*)^G$ is polynomial if and only if $V/G$ 
is nonsingular at the image of the origin. But then $V/G$ is nonsingular 
everywhere. Indeed, the singular set is closed, and $V/G$ can be singular only 
at the image of a linear subspace of $V$. Then our theorem follows from
Chevalley-Shephard-Todd Theorem~\ref{T:CST} and Lemma~\ref{L:isomofquots}.

If $W$ is a subspace of $V$, denote by $\fix(W)$ the pointwise stabilizer
of $W$ in $G$. In the sequel we refer to $\fix(W)$ as the \emph{fixator} of
the subspace $W$. Suppose that $S(V^*)^{\fix(W)}$ is polynomial for every
nontrivial subspace $W$ of $V$. Then by Lemma~\ref{L:isomofquots} $V/G$ is
nonsingular everywhere except possibly the image of the origin. But since $G$
is generated by pseudoreflections, $V/G$ is nonsingular by (2).

Now let us prove the implication $(1)\Rightarrow(2)$. Let $G$ be a finite
subgroup of $GL(V)$ such that the singularity $V/G$ is isolated, and $H$ the
subgroup of $G$ generated by pseudoreflections. Then by 
Lemma~\ref{L:isomofquots} and Chevalley-Shephard-Todd Theorem the fixator
$\fix(W)$ of every nontrivial subspace of $V$ is generated by pseudoreflections
and, moreover, $V/\fix(W)$ is nonsingular. Note also that $\fix(W)$ is contained
in $H$. Then, by (1), the quotient $V/H$ is nonsingular. Further, let $g$ be an
element of $G$ of order $p^r$, $r>0$, $p=\charact k$. Such an element
necessarily fixes a subspace $W$ of positive dimension. Thus $g\in\fix(W)$;
in particular, $g$ is contained in $H$. It follows that $p\nmid |G/H|$.
The nonmodular group $G/H$ acts naturally on $V/H$ and
$$V/G\simeq (V/H)/(G/H).$$
Theorem~\ref{T:equiv} is proven.

\begin{lemma}\label{L:dim1}
Kemper-Malle conjecture holds for groups $G$ possessing a $1$-dimensional
invariant subspace.
\end{lemma}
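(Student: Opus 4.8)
The plan is to verify the biconditional of the Kemper--Malle Theorem~\ref{T:KM} directly for such a $G$. First note that the implication ``$S(V^*)^G$ polynomial $\Rightarrow$ $G$ generated by pseudoreflections and all fixators have polynomial invariants'' is valid for \emph{every} finite $G\subseteq GL(V)$: if $S(V^*)^G$ is polynomial then $V/G$ is everywhere smooth, so for $P\neq 0$ Lemmas~\ref{L:isomofquots} and~\ref{L:fixator} identify $\mathcal{O}_{V/G,p(P)}$ with $\mathcal{O}_{V/\fix(kP),p(0)}$, which is therefore regular, and hence $S(V^*)^{\fix(kP)}$ is polynomial by Lemma~\ref{L:regofquot}; since $k$ is infinite a generic $P$ in a given nontrivial $W$ has $\fix(kP)=\fix(W)$, and $G$ is generated by pseudoreflections by Theorem~\ref{T:CST}. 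So the Lemma reduces to the reverse implication, which by the argument used to prove Theorem~\ref{T:equiv} amounts to showing that a group $G$ possessing a $1$-dimensional invariant subspace, generated by pseudoreflections and with all fixators having polynomial invariants, cannot have $V/G$ an isolated singularity. Fix such a subspace $L$, let $\chi\colon G\to k^\times$ be the character of $G$ on $L$ and set $G_0=\ker\chi=\fix(L)$.

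If $\chi$ is trivial, $G$ fixes $L$ pointwise, so by Lemma~\ref{L:fixator} the singularity of $V/G$ at $p(0)$ is isomorphic to the one at $p(P)$ for $0\neq P\in L$; since the fixator hypothesis makes $V/G$ smooth away from $p(0)$ (Lemmas~\ref{L:isomofquots}, \ref{L:fixator}, \ref{L:regofquot} again), $V/G$ is smooth. Now suppose $\chi\neq 1$. Then $G_0$ is a proper normal subgroup of $G$, the quotient $C:=G/G_0\simeq\chi(G)\subseteq k^\times$ is cyclic of order prime to $p$, and $\fix(kP)=G_0$ for $0\neq P\in L$; repeating the same Lemmas with $G_0$ in place of $G$ shows $S(V^*)^{G_0}$ is polynomial, so $V/G_0$ is smooth and $G_0$ is itself generated by pseudoreflections. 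Hence $V/G\simeq(V/G_0)/C$; since $p\nmid|C|$, Maschke's lemma lets us replace the basic invariants of $G_0$ by ones on which $C$ acts linearly, and by Theorem~\ref{T:CST} it suffices to prove that $C$ acts on $V/G_0$ as a group generated by pseudoreflections. As $G$ is generated by pseudoreflections and their images generate $C$, it is enough to show that every pseudoreflection $\sigma\in G\setminus G_0$ induces a pseudoreflection on $V/G_0$. Such a $\sigma$ is automatically of the form $\mathrm{id}_{\Pi}\oplus(\zeta\cdot\mathrm{id}_L)$ with $\Pi=\fix_V(\sigma)$ a complement of $L$ and $\zeta=\chi(\sigma)\neq 1$, so what is wanted is that $V/(G_0\rtimes\langle\sigma\rangle)$ be smooth. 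When $G_0\rtimes\langle\sigma\rangle$ is a proper subgroup of $G$ this follows by induction on $|G|$ (it is generated by pseudoreflections and still has $L$ as an invariant line), leaving the case $G=G_0\rtimes\langle\sigma\rangle$ with $\langle\sigma\rangle$ cyclic of order prime to $p$ generated by a single pseudoreflection.

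That case is where the hypothesis on $G$ is genuinely used, and it is the main obstacle. The idea is to work with the decomposition $V^*=L^\perp\oplus\Pi^\perp$: here $L^\perp=(V/L)^*$ is $G_0$-stable with $G_0$ acting trivially on $V^*/L^\perp$, while $\sigma$ fixes $L^\perp$ pointwise and multiplies the line $\Pi^\perp$ by $\zeta^{-1}$. Thus, writing $S(V^*)=S(L^\perp)[z]$ with $z$ a generator of $\Pi^\perp$, the group $G_0$ acts triangularly by $z\mapsto z+\ell_g$ with $\ell_g\in L^\perp$, and $\sigma$ respects the $z$-degree. One then wants a system of basic $G_0$-invariants adapted to the induced filtration of $S(V^*)^{G_0}$ by $z$-degree --- $n-1$ of them of $z$-degree $0$ (that is, in $S(L^\perp)^{G_0}$, which should again be polynomial) and one of positive $z$-degree --- for then $\sigma$ acts on the associated graded, hence, after an adjustment that is harmless because $p\nmid|\langle\sigma\rangle|$, on the invariants themselves, with a single nontrivial eigenvalue; this is precisely the assertion that $\sigma$ descends to a pseudoreflection (or the identity) on $V/G_0$. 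Proving that such an adapted system of $G_0$-invariants exists, i.e.\ controlling the ring $S(V^*)^{G_0}$ along the fixed line $L$, is the crux of the argument. (An alternative is to induct on $\dim V$ by passing to the representation of $G/\ker(G\to GL(V/L))$ on $V/L$, using that Kemper--Malle is known in dimension at most $2$, at the cost of checking that the quotient group again satisfies the inductive hypothesis.) In particular the Abelian case is covered, an Abelian $G\subseteq GL(V)$ over the algebraically closed field $k$ always having a common eigenvector and hence a $1$-dimensional invariant subspace.
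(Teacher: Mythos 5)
Your reduction is sound up to the point you yourself flag as ``the crux'': the claim that every pseudoreflection $\sigma\in G\setminus\fix(L)$ induces a pseudoreflection on the smooth quotient $V/\fix(L)$. That claim is asserted but not proved --- the sketch about an ``adapted system of basic $G_0$-invariants'' graded by $z$-degree is left as an open problem, so the proof is incomplete exactly where the real content lies. The way to close the gap is not invariant-theoretic but geometric, and it is what the paper's phrase ``the linearization of $G/\fix(W)$ is also generated by pseudoreflections'' is standing in for: the reflecting hyperplane $\Pi=\fix_V(\sigma)$ passes through the origin, so its image under the finite surjection $V\to V/\fix(L)$ is a closed subvariety of dimension $\dim V-1$ through the image of the origin, and it is pointwise fixed by the induced automorphism $\bar\sigma$ (for $v\in\Pi$ one has $\bar\sigma(G_0v)=G_0\sigma v=G_0v$). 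After formally linearizing the nonmodular cyclic action, $\fix(\bar\sigma)$ is a linear subspace containing this divisor, hence of codimension at most one; and $\bar\sigma\neq\mathrm{id}$ because $\sigma\notin G_0$ forces $S(V^*)^{\langle G_0,\sigma\rangle}\subsetneq S(V^*)^{G_0}$. This one observation makes your reduction to $G=G_0\rtimes\langle\sigma\rangle$, and the accompanying induction on $|G|$, unnecessary --- which is fortunate, because that induction has its own defect: the inductive hypothesis requires all fixators of the subgroup $G_0\rtimes\langle\sigma\rangle$ to have polynomial invariants, and fixators of a subgroup are intersections $\fix_G(U)\cap G'$, which do not obviously inherit polynomiality from $\fix_G(U)$.

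For comparison, the paper's own proof avoids your case split on $\chi$ and instead splits on whether $V/\fix(W)$ is singular: if it is, Lemma~\ref{L:fixator} propagates the singularity along the image of the invariant line $W$ and Lemma~\ref{L:isomofquots} transfers this to $V/G$, giving nonisolatedness directly (no fixator hypothesis needed); if it is not, the nonmodular group $G/\fix(W)$ acts on the smooth $V/\fix(W)$, is linearized, and is generated by pseudoreflections by the geometric argument above, so Theorem~\ref{T:CST} gives smoothness of $V/G$. Your treatment of the case $\chi=1$ and your verification of the forward implication of Theorem~\ref{T:KM} are correct and consistent with the paper, but without the descent-of-pseudoreflections step the lemma is not proved.
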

\begin{proof}
In view of Theorem~\ref{T:CST}, we have to show that if $G$ is generated
by pseudoreflections, has a $1$-dimensional invariant subspace, and $V/G$ is
singular, then the singularity $V/G$ is nonisolated. Let $W$ be the 
$1$-dimensional invariant subspace of $G$. Note that in this case the
fixator $\fix(W)$ is normal in $G$. If $V/\fix(W)$ is singular, then
by Lemma~\ref{L:fixator} it is nonisolated. Thus by Lemma~\ref{L:isomofquots}
$V/G$ is also nonisolated. 

Suppose that $V/\fix(W)$ is nonsingular. If $g$ is an element of $G$ of
order $p^r$, $r>0$, then the restriction of $g$ to the subspace $W$ is trivial.
Thus $g\in \fix(W)$. It follows that the quotient group $G/\fix(W)$ is
nonmodular. Its action on $V/\fix(W)$ can be locally formally linearized
(see, e.g., \cite{St}, Lemma~2.3), and, since $G$ is generated by 
pseudoreflections, the linearization of $G/\fix(W)$ is also generated by 
pseudoreflections. Then it follows from Chevalley-Shephard-Todd Theorem that
$$V/G\simeq (V/\fix(W))/(G/\fix(W))$$
is nonsingular.
\end{proof}

\begin{corollary}
Kemper-Malle conjecture holds for all Abelian groups generated by
pseudoreflections.
\end{corollary}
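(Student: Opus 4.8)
The plan is to reduce to Lemma~\ref{L:dim1}: it is enough to check that every finite abelian subgroup $G$ of $GL(V)$ possesses a $1$-dimensional invariant subspace, after which Lemma~\ref{L:dim1} may be quoted directly.

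To exhibit the invariant line I would invoke the standard fact that a commuting family of operators on a finite-dimensional vector space over an algebraically closed field has a common eigenvector. Explicitly, choose generators $g_1,\dots,g_m$ of $G$. Since $k=\Bar{k}$, the operator $g_1$ has a nonzero eigenspace $E_1\subseteq V$; because $g_2,\dots,g_m$ commute with $g_1$ they leave $E_1$ invariant, so $g_2|_{E_1}$ has a nonzero eigenspace $E_2\subseteq E_1$; iterating, after $m$ steps one obtains a nonzero subspace $E_m$ on which every generator, hence all of $G$, acts by scalars. Any line $W\subseteq E_m$ is then a $1$-dimensional invariant subspace of $G$. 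Note that this argument is insensitive to the characteristic: in the modular case $G$ need not be diagonalizable, but it still fixes such a line.

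Given $W$, Lemma~\ref{L:dim1} applies to $G$ and finishes the proof: if $G$ is generated by pseudoreflections and $V/G$ is singular, the singularity is nonisolated, which is exactly the Kemper-Malle conjecture for $G$. I do not expect any real obstacle here --- the substance is entirely contained in Lemma~\ref{L:dim1} (and, through it, in Theorem~\ref{T:equiv} and Theorem~\ref{T:CST}); the only ingredient peculiar to abelian groups is the elementary existence of a simultaneous eigenvector over $\Bar{k}$.
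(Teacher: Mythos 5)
Your proposal is correct and is exactly the paper's argument: the paper proves the corollary by noting that a linear Abelian group always has a $1$-dimensional invariant subspace and then invoking Lemma~\ref{L:dim1}. You merely spell out the standard common-eigenvector argument (which remains valid in the modular case, as you correctly note) that the paper leaves implicit.
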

\begin{proof}
Indeed, a linear Abelian group always has a $1$-dimensional invariant subspace.
\end{proof}

\begin{corollary}
Let $G<GL(V)$ be a finite Abelian group such that the singularity $V/G$ is
isolated. Then $V/G$ is formally isomorphic to a nonmodular cyclic singularity.
\end{corollary}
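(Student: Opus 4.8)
The plan is to reduce to a linear nonmodular Abelian action and then to the classical description of cyclic quotient singularities. Let $H$ be the subgroup of $G$ generated by pseudoreflections; since $G$ is Abelian, $H$ is normal, and a $1$-dimensional $G$-invariant subspace $W$ (one exists because $G$ is Abelian) is also $H$-invariant, so by Lemma~\ref{L:dim1} the Kemper-Malle conjecture holds for $H$. The argument proving $(1)\Rightarrow(2)$ in Theorem~\ref{T:equiv} invokes the conjecture only for the subgroup generated by pseudoreflections, so it applies verbatim and yields: $V/H$ is nonsingular, $p\nmid|G/H|$, and $V/G\simeq(V/H)/(G/H)$. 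Since $H$ is generated by pseudoreflections and $S(V^*)^H$ is then a polynomial ring of Krull dimension $n=\dim V$, the variety $V/H$ is isomorphic to affine space $\mathbb{A}^n$ in such a way that the image $O$ of the origin corresponds to $0$; the \emph{nonmodular} Abelian group $A=G/H$ acts on $\mathbb{A}^n$ fixing $0$, we have $V/G\simeq\mathbb{A}^n/A$, and the singular point $R$ of $V/G$ corresponds to $0$.

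Next I would linearize. As explained after Theorem~\ref{T:equiv} (see also \cite{St}, Lemma~2.3), the nonmodular action of $A$ on $\mathbb{A}^n$ can be formally linearized near $0$, so that $\widehat{\mathcal{O}}_{V/G,R}\simeq\widehat{\mathcal{O}}_{\mathbb{A}^n/A,0}$ for a \emph{linear} action of $A$. I then claim this linear action is free on $\mathbb{A}^n\setminus\{0\}$. From the fixator analysis in the proof of Theorem~\ref{T:equiv}, every element of $G$ fixing a subspace of positive dimension lies in $H$; hence no nontrivial $\bar a\in A$ can fix a point $p_H(v)$ with $v\ne0$, for a relation $av=hv$ with $h\in H$ would put the element $h^{-1}a\in G\setminus H$ into the fixator of $kv$. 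Thus $0$ is the only point of $\mathbb{A}^n=V/H$ fixed by a nontrivial element of $A$; since formal linearization carries the fixed locus of $\bar a$ to that of the corresponding linear automorphism, and the latter is a linear subspace meeting a formal neighborhood of $0$ only in $0$, it equals $\{0\}$.

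Finally, $A$ is a nonmodular finite Abelian group acting faithfully and linearly on $\mathbb{A}^n$ over the algebraically closed field $k$, hence diagonalizable, with characters $\chi_1,\dots,\chi_n\colon A\to k^*$; freeness on $\mathbb{A}^n\setminus\{0\}$ says precisely that each $\chi_i$ is injective, so $A$ embeds into $k^*$ and is therefore cyclic, with the action of the standard diagonal form $\tfrac1m(a_1,\dots,a_n)$, $m=|A|$, $\gcd(a_i,m)=1$. Since $V/G$ is singular we have $A\ne1$ and $n\ge2$, so $\mathbb{A}^n/A$ is one of the cyclic isolated quotient singularities of Zassenhaus-Vincent-Wolf, and $V/G$ is formally isomorphic to it. The step I expect to require the most care is the second paragraph: checking that formal linearization at the fixed point, which a priori controls only a formal neighborhood of $O$, does not destroy the freeness of the $A$-action away from $O$; the remaining steps are bookkeeping with results already in hand.
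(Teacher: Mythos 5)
Your proof is correct and follows essentially the same route as the paper's own (two-sentence) proof: quotient out the pseudoreflection subgroup $H$ using the Abelian case of the Kemper--Malle conjecture to reduce to a nonmodular Abelian group, then identify the resulting singularity via the Zassenhaus--Vincent--Wolf classification. The only difference is that you prove the final cyclicity claim directly --- diagonalize $A=G/H$, note that freeness away from the origin (which you correctly extract from the fixator argument in Theorem~\ref{T:equiv}) forces every character to be injective, so $A$ embeds in $k^*$ and is cyclic --- rather than citing the classification; all intermediate steps, including the care taken with formal linearization preserving the fixed locus, are sound.
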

\begin{proof}
We can get rid of pseudoreflections and assume that the group $G$ is nonmodular.
Then the statement follows from Zassenhaus-Vincent-Wolf classification of
isolated quotient singularities, see \cite{Wolf} or \cite{St}.
\end{proof}

Examples illustrating statement (2) of Theorem~\ref{T:equiv} are easy to
construct.
\begin{example}
Let $k$ be a field of characteristic $3$ and $V=k^2$. Let $G$ be a subgroup of
$GL(2,k)$ generated by
$$s=\begin{pmatrix}
2 & 0 \\
0 & 2
\end{pmatrix} \text{ and }
t=\begin{pmatrix}
1 & 1 \\
0 & 1
\end{pmatrix}.
$$
Then $G$ is isomorphic to the direct sum $\Z/2\oplus\Z/3$, and $t$ is a
pseudoreflection (transvection, see Section~\ref{S:proof}) generating a subgroup 
$H$ isomorphic to $\Z/3$. Basis invariants of $H$ are
$$f_1=x(x+y)(x+2y) \text{ and } f_2=y,$$
where $x,y$ is a basis of $V^*$. Thus $V/H$ is nonsingular. $G/H\simeq\Z/2$ acts
on $f_1$ and $f_2$ via $f_1\mapsto -f_1$, $f_2\mapsto -f_2$, thus the singularity
$V/G$ is isolated and isomorphic to a quadratic cone.
\end{example}

\section{Proof of Theorem~\ref{T:main}}\label{S:proof}

Now we are going to prove our main result, Theorem~\ref{T:main}. It will be
a consequence of more general Theorem~\ref{T:main_gen}. First let us recall
some terminology. A pseudoreflection $g\colon V\to V$ is called a 
\emph{transvection}, if $g$ has the only eigenvalue $1$. A transvection
necessarily has order $p$ equal to characteristic of the base field $k$. Note
that if $\dim V=2$, then any element of $GL(V)$ of order $p^r$, $r\geq 1$,
has in fact order $p$ and is a transvection.

\begin{theorem}\label{T:main_gen}
Let $V$ be a $3$-dimensional vector space over an algebraically closed field
of any characteristic $p$. Let $G$ be a finite subgroup of $GL(V)$ generated
by pseudoreflections. Denote by $G_p$ the normal subgroup of $G$ generated by 
all elements of order $p^r$, $r\geq 1$. Assume that $G_p$ is either
\begin{enumerate}
\item irreducible on $V$, or
\item has a $1$-dimensional invariant subspace $U$, or
\item has a $2$-dimensional invariant subspace $W$ and the restriction of $G_p$
to $W$ is generated by two noncommuting transvections (and thus is irreducible).
\end{enumerate}
Then Kemper-Malle conjecture holds for $G$, i.e., if $V/G$ is singular, then
the singularity is not isolated. Moreover, if $G$ satisfies condition (3) or
condition (2) plus the induced action of $G_p$ on $V/U$ is generated by two
noncommuting transvections, then $V/G$ is nonsingular.
\end{theorem}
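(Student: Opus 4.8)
The plan is to reduce everything to a careful analysis of the modular subgroup $G_p$ and its action on $V$, exploiting the hypothesis $\dim V = 3$ and the classification of two-dimensional groups generated by transvections. First I would observe that by Theorem~\ref{T:equiv} and Lemma~\ref{L:isomofquots} it suffices to understand when $V/G_p$ is nonsingular at the image of the origin and whether the induced (nonmodular) action of $G/G_p$ on $V/G_p$ preserves nonsingularity; since $G$ is generated by pseudoreflections, the only way $V/G$ fails to be isolated-or-smooth is if $V/G_p$ itself is singular along a positive-dimensional locus. By Lemma~\ref{L:fixator}, any singularity of $V/G_p$ occurring along a subspace pointwise fixed by $G_p$ propagates, so the whole argument comes down to: either $V/G_p$ is smooth, or its singular locus has positive dimension.

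Next I would treat the three cases separately. In case (2), when $G_p$ has a $1$-dimensional invariant subspace $U$, Lemma~\ref{L:dim1} already gives the conclusion — $V/G$ nonisolated — and the ``moreover'' part about $V/U$ would follow by applying the two-dimensional transvection classification to the induced action on the two-dimensional quotient $V/U$, concluding smoothness via Chevalley-Shephard-Todd after linearizing the nonmodular quotient. In case (1), when $G_p$ is irreducible on the $3$-dimensional $V$, I would invoke the Kemper-Malle Theorem~\ref{T:KM} directly (it applies to irreducible groups) together with the fact established in the proof of Theorem~\ref{T:equiv} that all fixators $\fix(W)$ have polynomial invariants; this forces $S(V^*)^{G_p}$ polynomial, hence $V/G_p$ smooth, and then smoothness of $V/G$ follows as in the $(2)\Rightarrow(1)$ argument. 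Case (3) is the substantive one: here $G_p$ stabilizes a $2$-plane $W$ and acts on it through two noncommuting transvections.

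For case (3) the strategy is to pin down the structure of $G_p$ using the classification of finite subgroups of $GL(2,k)$ generated by (at least two noncommuting) transvections — these are essentially $SL(2, p^f)$ and related groups — and then analyze the one-dimensional complement. Because $G_p$ acts on $W$ irreducibly, $V$ decomposes up to the $G_p$-action into $W$ plus a $G_p$-stable line $L$, possibly with a nontrivial extension; I would show that on $L$ the group $G_p$ acts through a character, but since $G_p$ is generated by order-$p^r$ elements and $k$ has characteristic $p$, that character is trivial, so $V = W \oplus L$ with $G_p$ acting trivially on $L$. Then $S(V^*)^{G_p} = S(W^*)^{G_p}[\text{coordinate on }L]$, and the irreducible transvection subgroups of $GL(2,k)$ are precisely the ones for which $S(W^*)^{G_p}$ is polynomial (this is the input from the two-dimensional theory), so $V/G_p$ is smooth; feeding this back gives $V/G$ smooth, hence a fortiori not an isolated singularity, and simultaneously proves the ``moreover'' claim.

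The main obstacle I anticipate is case (3), specifically verifying that the $G_p$-stable line carries the trivial representation and that no nonsplit extension obstructs the product decomposition of the invariant ring — in characteristic $p$ one must rule out the possibility that $V$ is a nonsplit extension of $L$ by $W$ (or vice versa) on which $G_p$ acts by unipotent matrices in a way that breaks polynomiality. Handling this requires a cohomological computation: $\mathrm{Ext}^1_{k[G_p]}(L, W)$ or $\mathrm{Ext}^1_{k[G_p]}(W, L)$, which for $G_p$ essentially of type $SL_2$ acting irreducibly on $W$ can be shown to vanish or to be absorbed by a change of basis. Once the module structure of $V$ is nailed down, the rest is a direct appeal to the two-dimensional transvection classification plus Chevalley-Shephard-Todd and the preliminary lemmas, so I would budget most of the effort there.
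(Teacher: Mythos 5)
Your reduction to $G=G_p$ and your treatment of cases (1) and (2) follow the paper's route (case (1) is exactly the Kemper--Malle irreducible case, case (2) is Lemma~\ref{L:dim1}). The genuine gap is in case (3), which is the only substantive case. Your plan rests on the claim that $V\simeq W\oplus L$ as a $G_p$-module with $G_p$ acting trivially on the line $L$, so that $S(V^*)^{G_p}\simeq S(W^*)^{H}[z]$ and the two-dimensional classification finishes the proof. This claim is false in general. The obstruction is the kernel $N$ of the restriction map $G_p\to GL(W)$: in a basis adapted to $W$ it consists of unipotent matrices fixing $W$ pointwise and acting trivially on $V/W$, and it is frequently nontrivial --- for instance, adjoin to lifts of the two generating transvections of $SL(2,p)$ the transvection $e_3\mapsto e_3+e_1$; the resulting group is generated by transvections, equals its own $G_p$, satisfies hypothesis (3), and has $N\neq 1$. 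Whenever $N\neq 1$ there is \emph{no} $G_p$-stable complement to $W$: a candidate line is spanned by some $(c_1,c_2,1)^T$, and an element of $N$ with column $(a,b)\neq(0,0)$ sends it off itself. No $\mathrm{Ext}^1$ or $H^1$ vanishing for groups of type $SL_2$ can rescue this, because the relevant extension class, restricted to the subgroup $N\leq G_p$, is the tautological embedding $N\hookrightarrow\mathrm{Hom}(V/W,W)$ and is therefore nonzero. So the module-theoretic splitting you want simply does not exist, and the product formula for the invariant ring fails with it.

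What is actually needed is a splitting of the \emph{group} extension $1\to N\to G_p\to H\to 1$, not of the module $V$, and even that is not free: a lift to $G_p$ of a transvection $t\in H$ is a priori a unipotent element whose fixed space is only a line, i.e.\ not a pseudoreflection, so one must first prove that $G_p$ contains honest transvections of $V$ mapping onto generators of $H$. The paper does this by explicit matrix manipulation separately for $H=SL(2,q)$ ($q$ odd), $H=I^*<SL(2,9)$, and the imprimitive characteristic-$2$ groups (Lemma~\ref{L:ting}); the common fixed line of two such lifted transvections then gives the semidirect product decomposition. After that the quotient is computed in two stages, $V\to V/N\to (V/N)/H$: the abelian unipotent group $N$ has polynomial invariants, and one checks that the induced action of $H$ on $V/N$ is again linear and decomposable with polynomial invariants. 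Your proposal is missing both the lifting-of-transvections step and the two-stage computation, and the step it substitutes for them would fail on the example above.
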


\begin{remark}\label{R:tgroupsindim2}
To see how Theorem~\ref{T:main} follows from Theorem~\ref{T:main_gen}, suppose
that $p$ is odd. In case (3) of Theorem~\ref{T:main_gen}, denote by $H$ the
restriction of the group $G_p$ to $W$. Since $\dim W=2$, $H$ is an irreducible
group generated by transvections. It follows from general representation
theory that $H$ is defined over a finite extension of the prime subfield of $k$
(\cite{CR}, Chapter XII). Classification of such groups is known since
the beginning of the twentieth century (see, e.g., \cite{Mitchell}, \S~1); it
implies that $H$ is conjugate in $GL(W)$ to the group $SL(2,q)$, $q=p^n$ -- the
group of $2\times 2$ matrices of determinant $1$ with entries in the Galois 
field $\Fq$, -- or, in characteristic $p=3$, $H$ may also be conjugate to the
binary icosahedral group $I^*\simeq SL(2,5)$ in a $3$-modular representation. In 
the last case $H$ is conjugate to the subgroup of $SL(2,9)$ generated by two
transvections
$$
t= \begin{pmatrix}
1 & 1 \\
0 & 1
\end{pmatrix} \text{ and }
s= \begin{pmatrix}
1 & 0 \\
\lambda & 1
\end{pmatrix},
$$
where $\lambda^2=-1$. Each group $SL(2,q)$ for odd $q$ is also generated by two
appropriate noncommuting transvections. In characteristic $2$, each group
generated by two noncommuting transvections is conjugate to an imprimitive group 
generated by
$$
t= \begin{pmatrix}
0 & 1 \\
1 & 0
\end{pmatrix} \text{ and }
s= \begin{pmatrix}
x & 0 \\
0 & x^{-1}
\end{pmatrix},
$$
where $x\ne 0,1$ is an element of the field $\mathbb{F}_{2^n}$. Obviously, this 
group is isomorphic to the dihedral group $D_n$. The group $SL(2,2^n)$, $n>1$, 
is not generated by two transvections, whereas over an algebraically closed
field the group $SL(2,2)$ is conjugate to an imprimitive group described above.
\end{remark}

\begin{proof}
The case of irreducible groups $G$ is proved by G. Kemper and G. Malle in
\cite{KM}. The proof in case (2) follows from Lemma~\ref{L:dim1}. So we 
concentrate on the proof of case (3).

Note that if we show that the quotient $V/G_p$ is nonsingular, then a 
nonmodular group $G/G_p$ generated by pseudoreflections acts naturally on the
nonsingular variety $V/G_p$. Such an action is locally formally linearizable,
thus the quotient $V/G\simeq (V/G_p)/(G/G_p)$ is also nonsingular. Therefore
it is enough to prove our theorem in the case $G=G_p$.

Denote by $N$ the kernel of the restriction map $G\to H$, so that we have an
extension
\begin{equation}\label{E:extension}
1\to N\to G\to H\to 1.
\end{equation}
The group $N$ is Abelian and consists of the matrices of the form
$$\begin{pmatrix}
1 & 0 & a \\
0 & 1 & b \\
0 & 0 & 1
\end{pmatrix},
$$
where $a$, $b\in k$ and the basis is chosen so that the invariant subspace $W$
is generated by the first two vectors.

\begin{lemma}\label{L:ting}
In the conditions of case (3) of Theorem~\ref{T:main_gen}, the group $G$ 
necessarily contains transvections that restrict to (nontrivial) transvections of
the group $H$.
\end{lemma}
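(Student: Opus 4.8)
The plan is to use the reduction already in force ($G=G_p$, $\dim V=3$) to pin down the matrix shape of the elements of $G$ so tightly that the list of pseudoreflections contained in $G$ becomes very short, and then to finish by a one-line counting argument.

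First I would note that $G$ acts trivially on the one-dimensional quotient $V/W$: this action is a character $G\to GL(V/W)=k^{*}$, and since $G=G_{p}$ is generated by elements of $p$-power order while $k^{*}$ has no nontrivial $p$-torsion in characteristic $p$, the character is trivial. Hence, in the chosen basis, every $g\in G$ has block form $g=\bigl(\begin{smallmatrix}\bar g & w\\ 0 & 1\end{smallmatrix}\bigr)$, with $\bar g\in GL(W)$ the image of $g$ in $H$ and $w\in W$, and $N$ is exactly the set of such $g$ with $\bar g=I$. Since the generators of $G=G_{p}$ are unipotent, $\det g=\det\bar g=1$, so $G\subseteq SL(V)$ and $H\subseteq SL(W)$.

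Next I would classify the pseudoreflections of $V$ lying in $G$. If $g=\bigl(\begin{smallmatrix}\bar g & w\\ 0 & 1\end{smallmatrix}\bigr)$ is such a pseudoreflection, then $g-I=\bigl(\begin{smallmatrix}\bar g-I & w\\ 0 & 0\end{smallmatrix}\bigr)$ has rank $1$. If $\bar g=I$, this forces $w\ne 0$, and $g\in N$ is a transvection of $V$ restricting to the identity of $H$. If $\bar g\ne I$, then $\bar g-I$ has rank $1$, so $\bar g$ is a pseudoreflection of $W$; writing its eigenvalues as $1,\zeta$, the eigenvalues of $g$ are $1,1,\zeta$, and $\det g=1$ forces $\zeta=1$. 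Thus $\bar g$ is a nontrivial transvection of $W$, $g$ is unipotent and hence a transvection of $V$, and its image $\bar g$ is a nontrivial transvection of $H$. In short, every pseudoreflection of $G$ either lies in $N$, or is a transvection of $V$ restricting to a nontrivial transvection of $H$.

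Finally I would conclude by contradiction: $G$ is generated by pseudoreflections, and if all members of a generating set of pseudoreflections lay in $N$, then $G=N$ and $H=G/N$ would be trivial, contradicting that $H$ is generated by two noncommuting transvections. Hence $G$ contains a pseudoreflection of the second type, which is exactly the assertion of the lemma. I do not expect a genuine obstacle; the only point that must be handled with care is the structural reduction in the second paragraph, since it is precisely the combination $\dim V=3$ (so $V/W$ is a line) and $G=G_{p}$ (trivializing the action on that line and placing $G$ inside $SL(V)$) that makes the list of pseudoreflections in $G$ short enough for the counting argument to bite.
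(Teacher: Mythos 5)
Your structural observations are correct and worth keeping: once $G=G_p$, every element of $G$ does have the block form $\bigl(\begin{smallmatrix}\bar g& w\\ 0&1\end{smallmatrix}\bigr)$ with $\det g=1$, and your classification of the pseudoreflections of this $G$ (they either lie in $N$, or are transvections of $V$ restricting to nontrivial transvections of $H$) is valid. The gap is the final counting step, where you invoke ``$G$ is generated by pseudoreflections.'' After the reduction to $G=G_p$ that hypothesis is no longer available: $G_p$ is generated by the elements of $p$-power order of the original group, i.e.\ by unipotent matrices, and in dimension $3$ a unipotent $g$ with $\operatorname{rank}(g-I)=2$ is \emph{not} a pseudoreflection. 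Nothing guarantees that $G_p$ is generated by its pseudoreflections, so from ``every pseudoreflection of $G$ lies in $N$'' you cannot conclude $G\subseteq N$. This is not a technicality; it is exactly the case your argument assumes away. The entire content of the lemma is the situation in which the lifts $\tilde t,\tilde s\in G$ of the generating transvections $t,s$ of $H$ are rank-$2$ unipotent elements rather than transvections, and the paper's proof consists of explicit matrix computations (done separately for $H=SL(2,q)$ with $q$ odd, $H=I^*<SL(2,9)$, and the imprimitive characteristic-$2$ groups) that manufacture enough elements of $N$ to multiply such a lift into an honest transvection.

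Running your argument on the original pseudoreflection-generated group instead does not repair it: there one loses the determinant-one and trivial-action-on-$V/W$ constraints, so a pseudoreflection acting nontrivially on $W$ need only restrict to a pseudoreflection of $W$ with eigenvalue $\zeta\ne 1$, not to a transvection; moreover such an element has order prime to $p$, hence does not lie in $G_p$, and its restriction need not lie in $H=G_p|_W$. So the short list of pseudoreflections that makes your counting argument work is only available for the group to which the hypothesis ``generated by pseudoreflections'' does not apply, and some substitute for the paper's explicit analysis of $N$ is genuinely needed.
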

\begin{proof}
According to the classification of the irreducible groups generated by 
transvections, the group $H$ is one of the following: $SL(2,q)$, $q$ is odd;
$I^*\simeq SL(2,5)$ in the $3$-modular representation described in 
Remark~\ref{R:tgroupsindim2}; the imprimitive $2$-modular group also described
in Remark~\ref{R:tgroupsindim2}. In fact, our proof works also for
the groups $H=SL(2,2^n)$, $n>1$, so we shall study also this case. Let 
us consider these possibilities one by one. In each case we assume that $G$ does 
not contain transvections with nontrivial image in $H$, and come to a 
contradiction.

\emph{Case 1}: $H=SL(2,q)$, $q$ is odd. Then $H$ contains matrices
$$
t= \begin{pmatrix}
1 & 1 \\
0 & 1
\end{pmatrix} \text{ and }
s= \begin{pmatrix}
1 & 0 \\
1 & 1
\end{pmatrix}.
$$
Let $\tilde{t}$ and $\tilde{s}$ be some lifts of $t$ and $s$ to $G$ respectively.
We assume that $\tilde{t}$ and $\tilde{s}$ are not transvections. Then one can
choose a basis in $V$ so that
$$
\tilde{t}= \begin{pmatrix}
1 & 1 & 0 \\
0 & 1 & 1 \\
0 & 0 & 1
\end{pmatrix},\quad
\tilde{s}= \begin{pmatrix}
1 & 0 & \mu \\
1 & 1 & 0 \\
0 & 0 & 1
\end{pmatrix},
$$
where $\mu\ne 0$ is an element of $k$. One has
$$
\tilde{t}^{-1}\tilde{s}\tilde{t}= \begin{pmatrix}
0 & -1 & \mu \\
1 & 2 & 0 \\
0 & 0 & 1
\end{pmatrix},\quad
\tilde{s}^{-1}\tilde{t}\tilde{s}= \begin{pmatrix}
2 & 1 & 0 \\
-1 & 0 & 1 \\
0 & 0 & 1
\end{pmatrix},
$$
$$
u=\tilde{t}^{-1}\tilde{s}\tilde{t}\tilde{s}^{-1}\tilde{t}\tilde{s}=
\begin{pmatrix}
1 & 0 & \mu-1 \\
0 & 1 & 2 \\
0 & 0 & 1
\end{pmatrix} \in N.
$$
Then
$$u^{-1}\tilde{s}=\begin{pmatrix}
1 & 0 & 1 \\
1 & 1 & -2 \\
0 & 0 & 1
\end{pmatrix}
$$
It follows that we could assume from the beginning that $\mu\in \Fq$.
Conjugating $u$ with appropriate elements of $G$ one gets elements of $N$ with
arbitrary vectors $(a,b,1)^T$, $a$, $b\in \Fq$ in place of the third column. In
particular, there is an element $u_1$ in $N$ with the third column $(0,-1,1)^T$. 
The product  $u_{1}\tilde{t}$ is a transvection that restricts to $t\in H$.

\emph{Case 2}: $H=I^*<SL(2,9)$. We represent the field $\mathbb{F}_9$ as the 
decomposition field of the polynomial
$$x^2+x+2$$
over $\mathbb{F}_3$. Then we can choose the transvections generating $H$ to be
$$
t= \begin{pmatrix}
1 & 1 \\
0 & 1
\end{pmatrix} \text{ and }
s= \begin{pmatrix}
1 & 0 \\
x+2 & 1
\end{pmatrix}.
$$
Denote again by $\tilde{t}$ and $\tilde{s}$ some lifts of $t$ and $s$ to $G$
respectively. If $\tilde{t}$ and $\tilde{s}$ are not transvections, in a 
suitable basis they have matrices
$$
\tilde{t}= \begin{pmatrix}
1 & 1 & 0 \\
0 & 1 & 1 \\
0 & 0 & 1
\end{pmatrix},\quad
\tilde{s}= \begin{pmatrix}
1 & 0 & \mu \\
x+2 & 1 & 0 \\
0 & 0 & 1
\end{pmatrix},
$$
where $\mu\ne 0$ is an element of $k$. By a routine calculation one checks that
$$(\tilde{t}\tilde{s})^5= \begin{pmatrix}
2 & 0 & 2x+1 \\
0 & 2 & \mu+2 \\
0 & 0 & 1
\end{pmatrix}, \quad
(\tilde{s}\tilde{t})^5= \begin{pmatrix}
2 & 0 & 2x+2\mu+1 \\
0 & 2 & \mu \\
0 & 0 & 1
\end{pmatrix},
$$
$$
u=(\tilde{t}\tilde{s})^5 (\tilde{s}\tilde{t})^5= \begin{pmatrix}
1 & 0 & \mu \\
0 & 1 & 2 \\
0 & 0 & 1
\end{pmatrix} \in N.
$$
It follows that
$$\tilde{u}=u\tilde{s}= \begin{pmatrix}
1 & 0 & 0 \\
x+2 & 1 & 1 \\
0 & 0 & 1
\end{pmatrix} \in G.
$$
Then one has
$$(\tilde{t}\tilde{u})^5= \begin{pmatrix}
2 & 0 & 2x+1 \\
0 & 2 & 2 \\
0 & 0 & 1
\end{pmatrix}, \quad
(\tilde{u}\tilde{t})^5= \begin{pmatrix}
2 & 0 & x+2 \\
0 & 2 & 0 \\
0 & 0 & 1
\end{pmatrix},
$$
and
$$
u_1=(\tilde{t}\tilde{u})^5 (\tilde{u}\tilde{t})^5= \begin{pmatrix}
1 & 0 & 0 \\
0 & 1 & 2 \\
0 & 0 & 1
\end{pmatrix} \in N.
$$
Then $(u_{1}^{-1} u)^{-1} \tilde{s}$ is a transvection that restricts to 
$s\in H$.

\emph{Case 3}: $H$ is imprimitive, the characteristic of $k$ is $2$. The
generators $t$ and $s$ of $H$ are defined in Remark~\ref{R:tgroupsindim2}. Their
lifts to $G$ can be chosen to be
$$\tilde{t}= \begin{pmatrix}
0 & 1 & \mu \\
1 & 0 & \mu^{-1} \\
0 & 0 & 1
\end{pmatrix} \text{ and }
\tilde{s}= \begin{pmatrix}
x & 0 & 0 \\
0 & x^{-1} & 0 \\
0 & 0 & 1
\end{pmatrix}
$$
respectively; here $\mu\ne 0,1$ is an element of $k$. One has
$$\tilde{t}^2= \begin{pmatrix}
1 & 0 & \mu+\mu^{-1} \\
0 & 1 & \mu+\mu^{-1} \\
0 & 0 & 1
\end{pmatrix},\:
(\tilde{s}\tilde{t})^2= \begin{pmatrix}
1 & 0 & \mu^{-1}+x\mu \\
0 & 1 & \mu+x^{-1}\mu^{-1} \\
0 & 0 & 1
\end{pmatrix} \in N.
$$
Together with $\tilde{t}^2$, $(\tilde{s}\tilde{t})^2$ the group $N$ contains also 
all their conjugates and products. One easily deduces that $N$ contains all 
matrices with the third column 
$$(f(x)(x+1)\mu,f(x^{-1})(x^{-1}+1)\mu^{-1},1)^T$$
for all polynomials $f$ with coefficients in $\mathbb{F}_2$. If the minimal
polynomial $g$ of $x$ over $\mathbb{F}_2$ is reciprocal, that is $g(x^{-1})=0$,
then if $h(x)(x+1)=1$ for some polynomial $h$, then also $h(x^{-1})(x^{-1}+1)=1$.
It follows that $N$ contains a matrix $u$ with the third column 
$(\mu,\mu^{-1},1)^T$. Then $u\tilde{t}$ is a transvection that restricts to
$t\in H$. If $g$ is not reciprocal, then, taking $f=g$, we see that $N$ contains
a matrix with the third column 
$$(0,g(x^{-1})(x^{-1}+1)\mu^{-1},1)^T.$$
Its conjugates and their combinations produce also a matrix with the third 
column $(0,\mu^{-1},1)^T$. Taking $f$ to be the minimal polynomial for $x^{-1}$, 
we find also a matrix in $N$ with the third column $(\mu,0,1)$. Then it again 
easily follows that $G$ has a transvection that restricts to $\tilde{t}$.

\emph{Case 4}: $H=SL(2,2^n)$, $n>1$. Since $SL(2,2)$ is a subgroup of each 
$SL(2,2^n)$, it is enough to prove the lemma for $H=SL(2,2)$. But this is
already proved above since $SL(2,2)$ is conjugate to an imprimitive group.
\end{proof}

\begin{lemma}
The group $G$ contains transvections that restrict to generators of the group
$H$. This holds also for $H=SL(2,2^n)$, $n>1$.
\end{lemma}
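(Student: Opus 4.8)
The plan is to bootstrap from Lemma~\ref{L:ting}. Fix a transvection $\tau\in G$ whose restriction $\bar\tau$ to $W$ is a nontrivial transvection of $H$; such a $\tau$ exists by that lemma. Recall that $H$ is by construction the image of the surjective restriction homomorphism $\rho\colon G=G_p\to GL(W)$, so every element of $H$ lifts to $G$. The key observation is that a conjugate of a transvection of $GL(V)$ is again a transvection of $GL(V)$: hence for any $g\in G$ the element $g\tau g^{-1}$ is a transvection of $G$, and $\rho(g\tau g^{-1})=\rho(g)\,\bar\tau\,\rho(g)^{-1}$. Letting $\rho(g)$ run over all of $H$, we obtain transvections of $G$ restricting to every element of the $H$-conjugacy class $\mathcal C$ of $\bar\tau$; similarly $\tau^{-1}$ and its conjugates restrict to the conjugates of $\bar\tau^{-1}$.

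It then remains to show that $\mathcal C$ generates $H$, that is, that the normal closure of $\bar\tau$ in $H$ is all of $H$. This I would verify case by case, using the list of possibilities for $H$ collected in Remark~\ref{R:tgroupsindim2}. If $H=SL(2,q)$ with $q\geq 5$, or $H=I^*\cong SL(2,5)$, or $H=SL(2,2^n)$ with $n\geq 2$, then $H$ is quasi-simple, so every proper normal subgroup is central; since a nontrivial transvection of a $2$-dimensional space is not scalar, the normal closure of $\bar\tau$ must be $H$. For the remaining small cases the normal subgroup lattice is written out directly: in $SL(2,3)$ the proper normal subgroups are $1$, the centre $\{\pm I\}$, and the quaternion group $Q_8$, none of which contains the order-$3$ element $\bar\tau$; in $SL(2,2)\cong S_3$ (the shape of the imprimitive group when $n=1$) the proper normal subgroups are $1$ and $A_3$, which do not contain the transposition $\bar\tau$; and in the imprimitive dihedral group $H\cong D_m$ one has $m=\mathrm{ord}(x)\mid 2^n-1$, so $m$ is odd, all reflections of $D_m$ form a single conjugacy class, and every proper normal subgroup consists of rotations, whence the normal closure of the reflection $\bar\tau$ is again $H$. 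In every case $\langle\mathcal C\rangle=H$.

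Combining the two steps, $G$ contains transvections restricting onto the generating set $\mathcal C$ of $H$ (it is enough to keep finitely many of them), which proves the lemma; the cases $H=SL(2,2^n)$, $n>1$, are included throughout. As a by-product, when $H=SL(2,q)$ or $H=SL(2,5)$ the standard transvection generators $t$ and $s$ are each conjugate in $H$ to $\bar\tau$ up to inversion, so one even obtains transvections of $G$ restricting exactly to $t$ and $s$; in the imprimitive case $s$ is a rotation rather than a transvection, and one instead uses the two reflections $t$ and $st$ as the transvection generating set. The only genuinely delicate part of the argument is the normal-subgroup bookkeeping --- in particular noticing that the imprimitive parameter $m$ is automatically odd (so that all its reflections are conjugate) and that the extra normal subgroup $Q_8$ of $SL(2,3)$ nevertheless fails to contain an element of order $p$; everything else is formal.
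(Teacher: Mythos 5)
Your argument is correct and reaches the stated conclusion, but by a genuinely different route from the paper's. Both proofs share the first step: conjugating the transvection supplied by Lemma~\ref{L:ting} by elements of $G$ (whose restrictions exhaust $H$) produces transvections of $G$ lying over the whole $H$-conjugacy class $\mathcal{C}$ of $\bar\tau$. The paper then invokes the explicit conjugacy classification of transvections in each $H$: for $SL(2,2^n)$, the imprimitive groups and $I^*$ there is a single class, so every transvection of $H$ lifts; for $SL(2,q)$ with $q$ odd there are two classes, distinguished by whether the off-diagonal entry is a square in $\Fq$, and the paper simply re-chooses the generator $s$ (i.e.\ the parameter $\lambda$) so that $t$ and $s$ lie in one class. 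You instead prove the group-theoretic fact that the normal closure of a single nontrivial transvection is all of $H$ (quasi-simplicity for $q\geq 4$, plus direct inspection of $SL(2,3)$, $SL(2,2)$ and the odd dihedral groups), so that $\mathcal{C}$ alone generates $H$; this is cleaner and avoids the quadratic-residue bookkeeping. Two caveats. First, what is actually used downstream (Lemma~\ref{L:splitting}) is the existence of \emph{two} transvections of $G$ restricting to two noncommuting transvections generating $H$; your argument delivers this only after one also checks that $\mathcal{C}$ itself contains such a pair, which for $SL(2,q)$, $q$ odd, is precisely the paper's ``$\lambda$ can be chosen to be or not to be a square'' step --- so that step is not entirely avoidable. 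Second, your by-product claim that $t$ and $s$ are always conjugate to $\bar\tau$ up to inversion fails for $SL(2,q)$ with $q\equiv 1\pmod 4$: there $-1$ is a square, inversion preserves each of the two transvection classes, and for $\lambda$ a non-square $s$ is conjugate to neither $t$ nor $t^{-1}$. This does not affect your main argument, but the remark should be corrected or dropped.
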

\begin{proof}
The group $H=SL(2,2^n)$, as well as imprimitive groups $H$ and $H=I^*<SL(2,9)$, 
have only one conjugacy class of transvections. Thus for these groups the lemma 
follows directly from Lemma~\ref{L:ting}. The groups $H=SL(2,q)$, $q$ is odd, 
have two conjugacy classes of transvections. The transvections
$$
t'= \begin{pmatrix}
1 & a \\
0 & 1
\end{pmatrix} \text{ and }
t''= \begin{pmatrix}
1 & b \\
0 & 1
\end{pmatrix},
$$
$a$, $b\in\Fq$, are conjugate if and only if $a$ and $b$ simultaneously are
or are not squares in $\Fq$. The two noncommuting transvections $t$ and $s$ 
generating $SL(2,q)$ can be chosen in the form
$$
t= \begin{pmatrix}
1 & 1 \\
0 & 1
\end{pmatrix}, \quad
s= \begin{pmatrix}
1 & 0 \\
\lambda & 1
\end{pmatrix},
$$
where $\lambda$ must be an element of $\Fq$ not belonging to a smaller field, 
and $\lambda^2\ne -1$ if $q=9$. We have already seen in the proof of 
Lemma~\ref{L:ting} that $t$ is a restriction of a transvection from $G$.
Clearly $\lambda$ in $s$ can be chosen to be or not to be a square in $\Fq$, so 
that $t$ and $s$ are conjugate.
\end{proof}

\begin{lemma}\label{L:splitting}
Extension~\eqref{E:extension} is a semidirect product, and the group
$H$ can be embedded in $G$ so that it acts on $V$ via a decomposable 
representation with two invariant subspaces of dimensions $1$ and $2$.
\end{lemma}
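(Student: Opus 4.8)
The plan is to show that the transvection lifts furnished by the preceding lemma already generate a complement to $N$ in $G$, and that this complement can be arranged to fix a line in $V$. Recall that in each of the relevant cases ($H=SL(2,q)$ with $q$ odd, $H=I^*<SL(2,9)$, and $H$ the imprimitive group in characteristic $2$) the group $H$ is generated by two noncommuting transvections $t,s$ of $GL(W)$, and by the preceding lemma $G$ contains transvections $\tilde t,\tilde s$ with $\tilde t|_W=t$, $\tilde s|_W=s$. First I would record the shape of such a lift. For a transvection $\tau$ of $GL(W)$ write $\tau-\mathrm{id}_W=v_\tau\otimes\phi_\tau$ with $\phi_\tau(v_\tau)=0$, and fix a vector $e_3$ completing a basis of $W$ to a basis of $V$. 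Since $\tilde\tau-\mathrm{id}_V$ has rank one and restricts on $W$ to $\phi_\tau(\cdot)\,v_\tau$, one gets $\tilde\tau-\mathrm{id}_V=v_\tau\otimes\psi$ with $\psi|_W=\phi_\tau$; because $\tilde\tau$ is a transvection with $\det=1$ and $\tau$ has $\det=1$, the map $\tilde\tau$ acts trivially on the one-dimensional quotient $V/W$, so $\psi(e_3)$ is an arbitrary scalar $c_\tau\in k$ and $\tilde\tau(e_3)=e_3+c_\tau v_\tau$, $\tilde\tau|_W=\tau$.

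Next I would look for a vector $w_0\in W$ making the line $\langle e_3+w_0\rangle$ invariant under both $\tilde t$ and $\tilde s$. A direct computation gives that $\langle e_3+w_0\rangle$ is $\tilde\tau$-invariant if and only if $(\tau-\mathrm{id}_W)(w_0)=-c_\tau v_\tau$, i.e. $\phi_\tau(w_0)=-c_\tau$; thus the admissible $w_0$ form an affine line $\ell_\tau\subset W$ parallel to $\ker\phi_\tau$. Since $t$ and $s$ do not commute they have distinct fixed lines in $W$ (transvections with a common fixed hyperplane form an abelian group), so $\phi_t$ and $\phi_s$ are linearly independent and $\ell_t\cap\ell_s$ consists of a single point $w_0$. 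Setting $e_3'=e_3+w_0$ we obtain a decomposition $V=W\oplus\langle e_3'\rangle$ preserved by $\tilde t$ and $\tilde s$, with $\tilde t,\tilde s$ acting as $t,s$ on $W$ and trivially on $\langle e_3'\rangle$.

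Finally I would put $H'=\langle\tilde t,\tilde s\rangle\le G$ and finish. By construction $H'$ preserves $V=W\oplus\langle e_3'\rangle$, acting on $V$ through a decomposable representation with invariant subspaces of dimensions $2$ and $1$; the restriction $H'\to H$ is onto (as $t,s$ generate $H$) and injective (an element of $H'$ that is trivial on $W$ is trivial on $\langle e_3'\rangle$ as well, hence on all of $V$), so $H'\cong H$. For the same reason $N\cap H'=1$, whence $|NH'|=|N|\,|H'|=|N|\,|H|=|G|$ and $G=NH'=N\rtimes H'$. This exhibits \eqref{E:extension} as a split extension with a complement $H'$ acting decomposably, which is the assertion of the lemma.

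I expect the only delicate points to be the normal form $\tilde\tau(e_3)=e_3+c_\tau v_\tau$ for a transvection lift — in particular the fact that the "extra column" of $\tilde\tau$ can only point along $\operatorname{im}(\tau-\mathrm{id}_W)$, which is exactly what forces the two affine lines $\ell_t,\ell_s$ to be one-dimensional — and the remark that noncommutativity of $t$ and $s$ makes $\phi_t,\phi_s$ independent, so that $\ell_t\cap\ell_s$ is nonempty. Everything else is a routine $3\times 3$ matrix computation, of the same type as the explicit generators written down in Lemma~\ref{L:ting}, and the argument is uniform in the characteristic.
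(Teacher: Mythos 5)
Your proof is correct and is essentially the paper's argument: the paper also lifts the two generating transvections to transvections $\tilde t,\tilde s$ of $G$ and observes that their fixed planes meet in a line not contained in $W$, which is exactly the line $\langle e_3+w_0\rangle$ you construct via the intersection $\ell_t\cap\ell_s$. Your write-up just supplies the details (the normal form of a transvection lift, the noncommutativity argument for independence of $\phi_t,\phi_s$, and the order count showing $G=N\rtimes H'$) that the paper leaves implicit.
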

\begin{proof}
Lift two generating transvections $t$ and $s$ of $H$ to transvections $\tilde{t}$
and $\tilde{s}$ of $G$ and consider the subgroup that they generate in $G$.
The planes fixed by $\tilde{t}$ and $\tilde{s}$ intersect at a line not
contained in the invariant subspace $W$. This gives the desired splitting. 
\end{proof}

\begin{remark}
At least for $H=SL(2,q)$, $q$ is odd, Lemma~\ref{L:splitting} could also be 
proved with a help of known results on vanishing of the first and the second 
group cohomology of $SL(2,q)$ with coefficients in the natural module 
(\cite{CPS}, \cite{Chih-Han}), or with a help of the complete reducibility of 
low dimensional modules over $SL(2,q)$ (\cite{Guralnik}). However, we would have 
to study in detail the structure of $N$ as a module over $H$, and consider 
separately the cases $H=I^*$ and $H$ is imprimitive. This is the reason why we 
have preferred the elementary uniform proof presented above.
\end{remark}

From now on, we fix some splitting of \eqref{E:extension} and consider $H$ as a
subgroup of $G$. Next we are going to study the quotient $V/G$ in two steps:
first take the quotient $V/N$ and then consider the induced action of $H$ on
$V/N$. By \cite{CW}, Theorem~3.9.2, the invariant ring of $N$ is polynomial.

\begin{lemma}
The induced action of $H$ on $V/N\simeq k^3$ is linear and decomposable with
invariant subspaces of dimensions $1$ and $2$.
\end{lemma}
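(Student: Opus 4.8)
The plan is to produce explicit invariants of $N$ that diagonalize the residual $H$-action. Recall from Lemma~\ref{L:splitting} that we have fixed a splitting, so $H<G$ acts on $V$ by a decomposable representation: there is an $H$-invariant line $U\subset V$ complementary (inside some larger invariant complement) to the $2$-dimensional $H$-invariant subspace $W$, and in a suitable basis $e_1,e_2,e_3$ the subspace $W=\langle e_1,e_2\rangle$ carries the transvection representation of $H$ while $e_3$ spans the $H$-fixed line. In the dual coordinates $x_1,x_2,x_3$ (with $x_3$ dual to $e_3$, so $x_3$ is an $H$-invariant linear form), the abelian group $N$ consists of the substitutions $x_1\mapsto x_1$, $x_2\mapsto x_2$, $x_3\mapsto x_3+\ell$ where $\ell$ ranges over a certain $\mathbb{F}_p$-subspace $L$ of $\langle x_1,x_2\rangle$ determined by the third-column vectors $(a,b,1)^T$ of the matrices in $N$. (That $L$ is an $\mathbb{F}_p$-subspace is exactly because $N$ is an elementary abelian $p$-group, using $p>2$ — or more precisely because every element of $N$ has order $p$; this is where the characteristic hypothesis enters through the shape of $N$.)

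First I would write down the invariant ring $S(V^*)^N$ explicitly. Since $N$ acts only by translating $x_3$ by elements of the $\mathbb{F}_p$-subspace $L\subset\langle x_1,x_2\rangle$, the invariants are generated by $x_1$, $x_2$, and the ``orbit product'' $u=\prod_{\ell\in L}(x_3+\ell)$; by the Dickson/Ore form of such an orbit product, $u$ is an additive (indeed $p$-)polynomial in $x_3$ with coefficients that are polynomials in $x_1,x_2$. By \cite{CW}, Theorem~3.9.2 (already invoked in the text) this ring is polynomial on $x_1,x_2,u$, so $V/N\simeq\spec k[x_1,x_2,u]\simeq k^3$ with coordinate functions $x_1,x_2,u$.

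Next I would check that $H$ acts \emph{linearly} in these coordinates and \emph{decomposably}. The forms $x_1,x_2$ span the (dual of the) transvection module of $H$, so $H$ permutes the linear span $\langle x_1,x_2\rangle$ linearly — that is a $2$-dimensional invariant subspace of functions on $V/N$. For $u$: because $x_3$ is $H$-invariant and $H$ normalizes $N$, $H$ sends the set $\{x_3+\ell:\ell\in L\}$ to $\{x_3+h\cdot\ell:\ell\in L\}=\{x_3+\ell':\ell'\in L\}$ (the last equality because $h\cdot L=L$, since $H$ normalizes $N$), hence $h\cdot u=u$ for all $h\in H$. Thus $u$ spans a $1$-dimensional $H$-invariant (in fact $H$-fixed) subspace, and $k[x_1,x_2,u]=k[x_1,x_2]\oplus u\cdot k[x_1,x_2,u]$ exhibits the decomposition: the induced representation of $H$ on $(V/N)^*$ is the direct sum of the $2$-dimensional transvection module $\langle x_1,x_2\rangle$ and the trivial $1$-dimensional module $\langle u\rangle$. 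Dualizing gives the claimed decomposable linear action on $V/N\simeq k^3$ with invariant subspaces of dimensions $1$ and $2$.

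The main obstacle is the verification that $h\cdot L=L$ for $h\in H$, i.e.\ that the subspace $L$ of translation vectors is genuinely $H$-stable; this is what makes $u$ an honest $H$-invariant rather than merely semi-invariant. It follows from $H$ normalizing $N$ (which holds because, after the splitting, $N\triangleleft G=NH$ and we can rechoose the orbit product $u$ to be $N$-invariant regardless), but one must be slightly careful that the specific generator $u$ of $S(V^*)^N$ — which is only well-defined up to units and lower-degree adjustments — can be chosen $H$-semi-invariant and then, by the fixed-point argument above, actually $H$-invariant; concretely, any two choices of orbit product differ by a unit, and the canonical orbit product $\prod_{\ell\in L}(x_3+\ell)$ is manifestly carried to itself by $H$ once $h\cdot L=L$ is known. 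The linearity of the $H$-action in $(x_1,x_2,u)$ is then immediate since $x_1,x_2,u$ are $H$-semi-invariants of a group acting by linear substitutions on the $x_i$.
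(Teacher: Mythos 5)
There is a genuine error at the very first step: you have transposed the action of $N$ on $V^*$. In the situation of this lemma (case (3) of Theorem~\ref{T:main_gen}), $N$ consists of the matrices with identity $2\times 2$ block on $W$ and third column $(a,b,1)^T$; on $V$ this fixes $W=\langle e_1,e_2\rangle$ pointwise and translates $e_3$ by elements of $W$, so on the dual basis it acts by $x\mapsto x+\lambda z$, $y\mapsto y+\mu z$, $z\mapsto z$ --- that is, the \emph{fixed} linear form is $z=x_3$ and the \emph{translated} ones are $x_1,x_2$, exactly the opposite of what you wrote. Consequently $x_1$ and $x_2$ are not $N$-invariant, your orbit product $u=\prod_{\ell\in L}(x_3+\ell)$ is not an invariant either (there is no subgroup of translations of $x_3$ by elements of $\langle x_1,x_2\rangle$ here), and the explicit presentation $S(V^*)^N=k[x_1,x_2,u]$ on which the whole argument rests is false. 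What you have actually written down is the \emph{dual} situation treated at the very end of the paper's proof of Theorem~\ref{T:main_gen}, where $G$ has a $1$-dimensional invariant subspace $U$ and $N$ acts by $x\mapsto x+ay+bz$, $y\mapsto y$, $z\mapsto z$; there the invariants are indeed $\prod_{g\in N}g\cdot x$, $y$, $z$ and an argument like yours works --- but that is not the lemma being proved.

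In the correct setup the degree-one invariant is $z$, and the other two generators $f_1,f_2$ of $S(V^*)^N$ have degree $>1$. The real content of the lemma is to show that $h\cdot f_1$ and $h\cdot f_2$ are linear combinations of $f_1$ and $f_2$ alone (with no admixture of powers of $z$ or of lower-degree invariants times $z$), and this is not automatic. The paper handles it by a degree count: if $\deg f_1\ne\deg f_2$, the smaller-degree generator is forced to be an honest $H$-invariant, which produces an $H$-fixed line inside $W$ and contradicts the irreducibility of $H$ on $W$; only then does one get $\deg f_1=\deg f_2$ and linearity of the induced action. Your proof never invokes the irreducibility of $H$ on $W$, which is a further signal that something is off --- a correct argument must use it or some substitute. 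To repair the proposal you would need to recompute $S(V^*)^N$ for the correct action and then supply this degree/irreducibility step.
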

\begin{proof}
Fix a basis for $V$ in which the elements of $H$ are represented by block
matrices of the form
$$\begin{pmatrix}
a & b & 0 \\
c & d & 0 \\
0 & 0 & 1
\end{pmatrix}.
$$
Let $x$, $y$, $z$ be the dual basis of $V^*$. Then $N$ acts on $x$, $y$, $z$ via
transformations
$$x\mapsto x+\lambda z,\quad y\mapsto y+\mu z,\quad z\mapsto z,$$
$\lambda$, $\mu\in k$. Clearly, $f_3=z$ is invariant. Let $f_1$ and $f_2$ be
two other invariants that together with $f_3$ generate $S(V^*)^N$. One can
choose $f_1$ and $f_2$ homogeneous and not containing the monomial $z^m$ with a
nonzero coefficient. First let us assume that $f_1$ and $f_2$ have different
degrees, say, $\deg f_2< \deg f_1$. Then $f_2$ must be semiinvariant under the 
induced action of $H$. Indeed, the degree of $f_2$ is preserved, thus for
$h\in H$ the polynomial $h\cdot f_2$ expresses only through $f_2$ and $f_3$,
but also $h$ acts only on $x$ and $y$, thus $h\cdot f_2$ expresses only through
itself. Moreover, since $H$ is generated by elements of order $p$ but any
element of $\Fq^*$ has multiplicative order coprime to $p$, $f_2$ is in fact
an invariant of $H$. For the same reason if
$$h\cdot f_1 = \lambda f_1 + g(f_2,f_3),$$
then $\lambda=1$ for each $h\in H$. But then the subset of $V/N$ defined by 
equations $f_2=f_3=0$ is pointwise fixed under the action of $H$. The natural 
projection $V\to V/N$ is $H$-equivariant. In turn, this implies that $G$ has an 
invariant line contained in the subspace $W=\{z=0\}$, which contradicts the 
condition $H$ to be irreducible on $W$. 

It follows that $\deg f_1= \deg f_2$. By the same argument as above $h\cdot f_1$ 
and $h\cdot f_2$ are linear combinations of $f_1$ and $f_2$. On $V/N$ the group
$H$ acts also by block matrices, thus the representation is decomposable. 
It should be also possible to prove this lemma by a direct computation of the 
invariants of $N$ and the induced action of $H$ on them.
\end{proof}

Let $V/N=V_1\oplus V_2$ be the decomposition of $V/N$ into a sum of invariant 
$H$-modules, $\dim V_1=2$, $\dim V_2=1$. Since the group $H$ acting on $V$ is 
generated by transvections, the action of $H$ on $V_1$ and $V_2$ is also 
generated by transvections. In particular, the action on $V_2$ is trivial, and
the invariant ring of $H$ acting on $V_1$ is polynomial (see, e.g., \cite{KM},
Proposition~7.1). It follows that the invariant ring of $H$ acting on $V/N$ is
also polynomial, and hence the invariant ring of $G$ acting on $V$ is polynomial.

It remains to show that if $G$ acts on $V$ with a $1$-dimensional invariant
subspace $U$ and the induced action of $G$ on $W=V/U$ is irreducible and
generated by two noncommuting transvections, then $V/G$ is again nonsingular.
This situation is in some sense dual to the one considered above. Denote
$H$ the natural image of $G$ in $GL(W)$ and $N$ the corresponding kernel. We
again have an extension of the form \eqref{E:extension}. In an appropriate basis
$N$ is an Abelian group of the matrices of the form
$$\begin{pmatrix}
1 & a & b \\
0 & 1 & 0 \\
0 & 0 & 1
\end{pmatrix},
$$
where $a$, $b\in k$. An argument, which is similar to the one given above and
which we do not write here, shows that the group $G$ contains transvections that 
map to the generators of $H$. The proof of the following lemma is easy and left 
to the reader.
\begin{lemma}
Any two transvections on a $3$-dimensional vector space have a common 
$2$-dimensional invariant subspace.
\end{lemma}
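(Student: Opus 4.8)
The plan is to reduce the statement to the explicit normal form of a transvection and then exhibit the invariant subspace by hand. First I would recall that a nontrivial transvection $g$ on $V$ can be written as $g=\mathrm{id}_V+N$, where $N$ is a linear map of rank one with $N^2=0$; concretely $N(v)=\varphi(v)\,w$ for a nonzero covector $\varphi\in V^*$ and a nonzero vector $w\in V$ with $\varphi(w)=0$. The condition $\varphi(w)=0$ is precisely what forces $1$ to be the only eigenvalue (it is equivalent to $N^2=0$, hence to $g$ having order $p$), and $w$ spans the image of $N$ while lying in the fixed hyperplane $\ker\varphi$. I will call $w$ the \emph{direction} of $g$.

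Given two transvections $g_1=\mathrm{id}_V+N_1$ and $g_2=\mathrm{id}_V+N_2$ with directions $w_1,w_2$, the key observation is that any subspace $S$ containing $w_i$ is automatically $g_i$-invariant: for $v\in S$ one has $g_i(v)=v+\varphi_i(v)\,w_i\in S$. So it suffices to produce a $2$-dimensional subspace $S$ containing both $w_1$ and $w_2$. Since $U:=\langle w_1,w_2\rangle$ has dimension $1$ or $2$ and $\dim V=3$, there is at least one $2$-dimensional subspace $S$ with $U\subseteq S\subseteq V$: take $S=U$ when $\dim U=2$, and any plane through $U$ when $\dim U=1$. Then $S$ is invariant under both $g_1$ and $g_2$, and being finite-dimensional and preserved by the invertible maps $g_i$ it is in fact mapped onto itself.

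There is essentially no obstacle here; the whole content is the normal form of a transvection together with the remark that a transvection fixes every subspace through its direction vector. The only points needing a word are the trivial degenerate cases: a "transvection" could a priori be the identity, in which case every subspace works, and $w_1,w_2$ may be proportional, which is covered by the "any plane through $U$" clause. The same argument shows more generally that any finite collection of transvections on $V$ whose directions span a proper subspace of $V$ admits a common invariant hyperplane.
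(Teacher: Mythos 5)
Your proof is correct and complete; the paper explicitly leaves this lemma to the reader, and your argument (a transvection $\mathrm{id}+N$ with $N(v)=\varphi(v)w$, $\varphi(w)=0$, preserves every subspace containing its direction $w$, so any plane through $\langle w_1,w_2\rangle$ works) is exactly the intended elementary verification. Nothing is missing.
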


Now if $\tilde{t}$ and $\tilde{s}$ are transvections of $G$ that map to 
generators of $H$, consider their common $2$-dimensional invariant subspace. 
Clearly it does not contain $U$, thus can be identified with $W$. It follows
that the sequence~\eqref{E:extension} again splits.

In a suitable basis $x$, $y$, $z$ of $V^*$, the group $N$ acts via 
transformations of the form
$$x\mapsto x+ay+bz,\quad y\mapsto y,\quad z\mapsto z.$$
It is not hard to determine basis invariants of $N$. They are 
$$f_1=\prod_{g\in N} g\cdot x,\quad f_2=y,\quad f_3=z,$$
in particular, the invariant ring $S(V^*)^N$ is polynomial. It is also clear in
this case that the induced action of $H$ on $V/N$ is linear and decomposable.
As above, it follows that $(V/N)/H$ and hence $V/G$ is nonsingular. This
finishes the proof of Theorem~\ref{T:main_gen}.
\end{proof}

\end{document}